\newcommand{\titel}{Spectrum occupies pseudospectrum for  random matrices with diagonal deformation and variance profile }
\numberwithin{equation}{section}
\newcommand{\R}{\mathbb{R}}  
\C\renewcommand{\C}{\mathbb{C}}\else\newcommand{\C}{\mathbb{C}}\fi 
\renewcommand{\Im}{\mathrm{Im}\,} 
\newcommand{\N}{\mathbb{N}}  
\newcommand{\E}{\mathbb{E}}  
\renewcommand{\P}{\mathbb{P}}  
\newcommand{\eps}{\varepsilon} 
\newcommand*{\defeq}{\mathrel{\vcenter{\baselineskip0.5ex \lineskiplimit0pt\hbox{\scriptsize.}\hbox{\scriptsize.}}}=}
\DeclareMathOperator{\supp}{supp}
\DeclareMathOperator{\ran}{ran}
\DeclareMathOperator{\smin}{s_{\min}}
\newcommand{\DD}{\mathbb{D}}
\newtheoremstyle{test}
  {}
  {}
  {\itshape}
  {}
  {\bfseries}
  {.}
  { }
  {}
\theoremstyle{test}
\newtheorem{defi}{Definition}[section]
\newtheorem*{rem*}{Remark}   
\newtheorem*{ex*}{Example}   
\newtheorem*{pro*}{Proposition} 
\newtheorem*{def*}{Definition}
\newtheorem*{coro*}{Corollary}
\newtheorem*{thm*}{Theorem}
\theoremstyle{test}
    \newtheorem{theorem}[defi]{Theorem}
    \newtheorem{proposition}[defi]{Proposition}
    \newtheorem{corollary}[defi]{Corollary}
    \newtheorem{lemma}[defi]{Lemma}
    \newtheorem{definition}[defi]{Definition}
    \newtheorem{convention}[defi]{Convention}
    \newtheorem{remark}[defi]{Remark}
\newcommand{\bels}[2] {
        \begin{equation} \label{#1} \begin{split} 
                #2 
        \end{split} \end{equation}
        }
\newcommand{\bbm}{\mathbbm} 
\renewcommand{\cal}{\mathcal} 
\renewcommand{\frak}{\mathfrak} 
\newcommand{\ol}[1]{\overline{#1} \!\,} 
\newcommand{\wh}{\widehat}
\newcommand{\wt}{\widetilde}
\renewcommand{\P}{\mathbb{P}}
\newcommand{\ii}{\mathrm{i}} 
\newcommand{\dd}{\mathrm{d}}
\newcommand{\cbb}[1]{\biggl\{{#1}\biggr\}}
\newcommand{\abs}[1]{\lvert #1 \rvert}
\newcommand{\absb}[1]{\big\lvert #1 \big\rvert}
\newcommand{\absB}[1]{\Big\lvert #1 \Big\rvert}
\newcommand{\absbb}[1]{\bigg\lvert #1 \bigg\rvert}
\newcommand{\norm}[1]{\lVert #1 \rVert}
\newcommand{\avg}[1]{\langle #1 \rangle}
\newcommand{\db}[1]{\llbracket #1 \rrbracket}
\DeclareMathOperator{\diag}{diag}
\DeclareMathOperator{\tr}{Tr}
\DeclareMathOperator{\Tr}{Tr}
\DeclareMathOperator{\im}{Im}
\DeclareMathOperator{\dist} {dist}                
\DeclareMathOperator{\spec}{Spec}						
\newcommand{\1} {\mspace{1 mu}}
\newcommand{\2} {\mspace{2 mu}}
\newcommand{\qq}[1]{\llbracket #1 \rrbracket} 
\newcommand{\mtwo}[2]
{
\left(
\begin{array}{cc}
#1 
\\
#2
\end{array}
\right)
}
 \newcommand{\linkdest}[1]{\Hy@raisedlink{\hypertarget{#1}{}}}
\def\blfootnote{\xdef\@thefnmark{}\@footnotetext}
\begin{document}
\blfootnote{Date: \today}
\blfootnote{Keywords:  Brown measure, matrix Dyson equation, non-Hermitian random matrix } 
\blfootnote{MSC2010 Subject Classifications: 60B20, 15B52.}

\title{\vspace{-0.8cm}{\textbf{\titel}}} 
\author{Johannes Alt$^{\text{a,} }$\thanks{Funding from the Deutsche Forschungsgemeinschaft (DFG, German Research Foundation) under Germany's Excellence Strategy - GZ 2047/1, project-id 390685813 is gratefully acknowledged. \newline \hspace*{0.5cm} Email: \href{mailto:johannes.alt@iam.uni-bonn.de}{johannes.alt@iam.uni-bonn.de}} 
 \hspace*{0.5cm} \and \hspace*{0.5cm} Torben Krüger$^\text{b,}$\thanks{
Financial support 
from VILLUM FONDEN Young Investigator Award  (Grant No. 29369) is gratefully acknowledged.  
\newline \hspace*{0.5cm} Email: \href{mailto:torben.krueger@fau.de}{torben.krueger@fau.de}}}
\date{}

\maketitle

\vspace*{-0.2cm} 

\noindent \hspace*{3.54cm} $^\text{a}$Institute for Applied Mathematics, University of Bonn \\ 
\noindent \hspace*{3.54cm} $^\text{b}$Department of Mathematics, FAU Erlangen-Nürnberg \\

\begin{abstract}
We consider 
 $n\times n$ non-Hermitian random matrices with independent entries and a variance profile, as well as an additive  deterministic diagonal deformation.
  We show that their empirical eigenvalue distribution converges to a limiting density as $n$ tends to infinity and that the support of this density  in the complex plane exactly coincides  with the $\eps$-pseudospectrum in the consecutive limits $n \to \infty$ and $\eps \to 0$. 
   The limiting spectral measure is identified as the Brown measure of a deformed operator-valued circular element with the help of \cite{AK_Brown}.
\end{abstract}


\section{Introduction} 
The celebrated circular law asserts that the empirical spectral distribution (ESD) of a random  matrix $X\in \C^{n \times n}$ with centered i.i.d.\ entries  $x_{ij}$ of variance $\E\1 \abs{x_{ij}}^2=\frac{1}{n}$ converges to the uniform distribution  on the complex unit disk as $n$ tends to infinity \cite{Girko1984,bai1997,tao2010} (see  \cite{BordenaveChafai2012} for a review). 
The convergence of the ESD to a non-random radially symmetric probability measure $\sigma$, supported on a disk, generalises to  the case when  the entries   of $X$  remain independent but admit differing distributions  with entry dependent variances $s_{ij}:=\E\1 \abs{x_{ij}}^2$ \cite{Cook2018,Altcirc} and even to  correlated entries with a decaying correlation structure \cite{AK_Corr_circ}. In both cases $\sigma$ is no longer uniform on its support in general.

In this work we consider a model in which a diagonal deterministic deformation $A = \diag(a_i)_{i=1}^n$ is added to a matrix $X$ with independent entries and variance profile $S=(s_{ij})_{i,j=1}^n$. The deformation $A$ breaks the radial symmetry of the limiting ESD $\sigma$. When $s_{ij} = \frac{1}{n} s(\frac{i}{n}, \frac{j}{n})$ and $a_i = a(\frac{i}{n})$ are discretisations of bounded profile functions $s:[0,1]^2 \to \R$ and $a:[0,1] \to \R$, respectively, the  measure $\sigma$ is realised as the Brown measure of an element $a + \frak{c} \in \cal{A}$ in a $L^{\infty}[0,1]$-valued noncommutative probability space  $(\cal{A}, L^{\infty}[0,1],E)$ within free probability theory.  Here $\cal{A}$ is a $W^*$-probability space $\cal{A}$ with a faithful tracial state, $L^{\infty}[0,1] \subset \cal{A}$ is a subalgebra and 
 $E: \cal{A} \to L^{\infty}[0,1]$ a conditional expectation. 
The Brown measure is a generalisation of the spectral measure to non-normal operators~\cite{Brown1986,HaagerupLarsen2000}.

When the variance profile function $s$ is constant, this model describes adding the deformation $A$ to a matrix with i.i.d. entries. On the level of random matrices the  ESD  was originally computed  in \cite{Khoruzhenko1996}. On the free probability level 
 $\frak{c}\in \cal{A}$ is a circular element that is $\ast$-free  from $a$ in this situation \cite{Sniady2002}. The corresponding analysis was carried out in \cite{BordenaveCaputoChafai2014,BordenaveCapitaine2016}. More recently, detailed information about the regularity of the Brown measure for these deformed circular elements, such as existence of a density \cite{BelinschiYinZhong2024} and analyticity \cite{Zhong2021,HoZhong2023},   have been obtained.  A jump discontinuity at the  edge of the support of $\sigma$ has been established in \cite{ErdosJi2023}, where it has also been shown that around the zeros of the density within its support, $\sigma$
grows at most quadratically with a matching  lower bound on at least  a two-sided cone with apex at the zero. 
A complete classification of these singularities is given in  \cite{AK_Brown}. There the edge singularities are characterised by the local shape of the boundary of $\supp \sigma$ and the internal singularity by the local growth of the density away from the two-sided cone. 

When the variance profile $s$ is non-constant the non-normal matrix $X+A$ belongs to the Kronecker matrix class discussed in \cite{AEKN_Kronecker}.  
Non-normal  random matrices and a detailed understanding of their spectra play an important role in many applications, ranging from the stability analysis of food webs \cite{Allesina:2015ux,may1972will,Hastings1992} and  
quantum chaotic scattering  \cite{FyodorovSommers1997}  to investigating the transition to chaos in neuronal networks  \cite{Sompolinsky1988,PhysRevLett.97.188104}.  
A persistent challenge in the analytic study of such matrices $X$ is their spectral instability, i.e.\ the fact that tiny changes in the matrix entries may lead to large deviations of the eigenvalues. To remedy this issue the $\eps$-pseudospectrum $\spec_\eps(X)$ is  introduced (see e.g.\ \cite{TrefethenEmbree+2005} for an overview), which is stable under perturbations,  monotonically increasing in $\eps>0$ and contains the spectrum, namely $\bigcap_{\eps>0} \spec_\eps(X)= \spec(X)$. Especially for high dimensional $X=X_n \in \C^{n \times n}$ the dependence of $\spec_\eps(X_n)$ on small values of $\eps$ may  very unstably depend on $n$ (see e.g.\ \cite[Section~11.6.3]{MingoSpeicherBook} for the example of a shift operator). In particular, the eigenvalues may accumulate in a much smaller area than the asymptotic pseudospectrum $\spec_0^\infty:=\lim_{\eps \downarrow 0}\lim_{n \to \infty}\spec_\eps(X_n)$. 
In the case of Toeplitz matrices $A$ with a very small added randomness $X$ for example, the spectrum concentrates on curves given by the image of the unit circle by the Toeplitz symbol inside $\spec_0^\infty$  \cite{GuionnetWoodZeitouni2014,REICHEL1992153,BASAK_PAQUETTE_ZEITOUNI_2019,SV2020}.

In contrast, 
our main result shows that for matrices with independent entries and diagonal deformation the set $\spec_0^\infty$ coincides with the support of the limiting spectral measure  $\sigma$, i.e.\ that the spectrum occupies the entire $\eps$-pseudospectrum in the consecutive limits $n \to \infty$ and $\eps\downarrow 0$ under some regularity assumptions on the profile functions. 
In particular, $\spec_0^\infty = \supp \sigma$ stably depends on the expectation profile $a$ and the  variance profile $s$. As was shown in \cite{AK_Brown}, the density of the Brown measure is strictly positive on an open domain $\mathbb{S}:= \{  \beta  < 0 \} \subset \C$ with boundary $\partial \mathbb{S}= \{  \beta  = 0 \}$, where $\beta : \C \to \R$ is a continuous function that is real analytic in a neighbourhood of $\partial \mathbb{S}$. 
In an independent work \cite{CampbellCipolloniErdosJi2024} that was posted on arXiv on the same day as the first version of our work, the authors show that the identity $\spec_0^\infty = \supp \sigma$  holds for the case when the entries of $X$ are i.i.d., i.e.\ when $s$ is constant, and the limits $n \to \infty$, $\eps \downarrow 0$ in the definition of  $\spec_0^\infty$ can be taken simultaneously with an optimal dependence of $\eps$ on $n$. Allowing this $n$-dependence of $\eps=\eps_n$ is a prerequisite for proving  universality of the local edge statistics in  \cite{CampbellCipolloniErdosJi2024}. 
Amending their argument as described in \cite[Remark 2.15]{CampbellCipolloniErdosJi2024}
 and using \cite[Proposition~5.16 (iv)]{AK_Brown} to check a technical assumption, 
 the identity $\lim_{n \to \infty}\spec_{\eps_n}(X_n) = \supp \sigma$ can also be shown from the results in \cite{CampbellCipolloniErdosJi2024} in the setting with non-constant variance profile $s$ whenever $s$ is strictly bounded away from zero.
The benefit of the approach presented in our current work, which does not aim to track any dependence of $\eps$ on $n$, is that it allows for zero blocks in the variance profile. Thus, our results apply to non-Hermitan random band matrices \cite{Jain2021_non_Hermitian_random_band,Jana2022_nonHermitian_random_band} with block structure as long as the band remains large in the $n \to \infty$ limit, 
i.e.\ when the number of non-zero entries is proportional to $n^2$.

\section{Main results} \label{sec:main_results}

In this section, we state our assumptions and the main results. 
 In the following, we take $n \in \N$ and write $\db{n}$ for the discrete interval $\db{n} = \{ 1, \ldots , n \}$.

\newcounter{assumptions} 

\begin{enumerate}[label=\textbf{A\arabic*}]
\setcounter{enumi}{\value{assumptions}} 
\item \label{assum:independent_centered} \stepcounter{assumptions} 
\emph{Independent, centered entries:} 
The entries of $X=(x_{ij})_{i,j \in \db{n}}$ are independent and centered, i.e.\ $\{ x_{ij} \colon i,j \in \db{n}\}$ is a family of independent  random variables and $\E\1 x_{ij}=0$. 
Moreover, all moments  of the entries of $\sqrt{n}X$ are finite, i.e.\  there is a sequence of positive constants $C_\nu$ such that 
\bels{bounded moments}{
\E\,\abs{x_{i j}}^\nu\,\le\, C_\nu \2n^{-\nu/2}\,,
}
for all $i,j\in \llbracket n\rrbracket$ and $\nu \in \N$.  
\item \label{assum:smallest_singular_value} \stepcounter{assumptions}
\emph{Smallest singular value: } 
Let $X \in \C^{n\times n}$ be a random matrix. 
Suppose that there are constants $r_0 \in (0,1/2]$, $K_0 \geq 1$, $\alpha >0$, $\beta >0$ and $C >0$ such that 
\[ \P \big( \smin (X + Z) \leq n^{-1/2 - \beta} \big) \leq C n^{-\alpha}  \] 
for any deterministic diagonal matrix $Z  = \diag(z_1, \ldots, z_n)$ satisfying $r_0 \leq \abs{z_i} \leq K_0$ for all $i \in \db{n}$. 
\end{enumerate} 
In the following we will always assume that the  deformation $A_n =(a^{(n)}_{ij})_{i,j=1}^n$ and the variance profile of $X_n=(x^{(n)}_{ij})_{i,j=1}^n$ are discretisations   of  limiting profile functions $a \colon [0,1] \to \C$  and $s \colon [0,1]^2 \to [0,\infty)$, i.e.\ that
\begin{equation} \label{eq:a_ij_and_s_ij_relation_a_and_s} 
 a_{ij}^{(n)} = a\bigg(\frac{i}{n} \bigg)\delta_{ij},  
\qquad \qquad 
\E \abs{x_{ij}^{(n)}}^2 = \frac{1}{n} s\bigg(\frac{i}{n}, \frac{j}{n} \bigg)  
\end{equation} 
for all $i$, $j \in \db{n}$. 
For the following assumptions on the profile functions, we suppose that there are $K \in \N$ and 
disjoint intervals $I_1$, \ldots, $I_K \subset [0,1]$ of positive length such that $I_1 \cup \ldots \cup I_K = [0,1]$. 
\begin{enumerate}[label=\textbf{A\arabic*}]
\setcounter{enumi}{\value{assumptions}} 
\item \label{assum:primitive_upper_lower} \stepcounter{assumptions}
 \emph{Block-primitivity of variance profile: } 
 The matrix $Z = (z_{lk})_{l,k \in \db{K}} \in \{0,1\}^{K\times K}$ with entries $z_{lk} :=  \bbm{1}(s|_{I_l \times I_k} \ne 0)$ is primitive, i.e.\ there is $L \in \N$ such that $(Z^L)_{lk} \geq 1$ for all $l$, $k \in \db{K}$, and that the diagonal blocks of $s$ are 
nonzero, i.e.\ $z_{kk} =1$ for all $k \in \db{K}$. 
Moreover, we suppose  there is a constant $c>0$ such that  
$\abs{a(x)} \leq 1/c$, $s(x,y) \leq 1/c$ for all $x$, $y \in [0,1]$ and 
\begin{equation} \label{eq:lower_bound_s} 
z_{lk} \inf_{x,y \in I_l \times I_k} s(x,y) \geq c \, z_{lk} 
\end{equation}   
for all $l$, $k \in \db{K}$.

\item \label{assum:s_a_piecewise_continuous} \stepcounter{assumptions}
 \emph{Piecewise $1/2$-Hölder continuous  profile functions: }   
For all $l$, $k \in \db{K}$ the restrictions
$s|_{I_l \times I_k}$ and $a|_{I_l}$ have $1/2$-Hölder continuous extensions to the closed sets $\overline{I_l \times I_k}$ 
and $\overline{I_l}$, respectively. 

\item \emph{Piecewise Hölder continuous  deformation profile:} \label{assum:a_better_than_hoelder_1_2} \stepcounter{assumptions}
The restrictions $a|_{I_l}$ of the profile function $a \colon [0,1] \to \C$ have a $\theta$-Hölder continuous extension to $\overline{I_l}$  for each $l \in \db{K}$, where $\theta \in (1/2,\infty)$. 
\end{enumerate}

The constants in the assumptions \ref{assum:independent_centered} -- \ref{assum:a_better_than_hoelder_1_2} are  model parameters and  independent of $n$ and, therefore, the respective estimates are uniform in $n$.

The next theorem shows that the empirical spectral distribution of non-Hermitian random matrices with independent entries, a variance profile and a diagonal expectation has a deterministic limit as the matrix size tends to infinity. 
The independence of the limit from the entry distributions was shown in \cite[Appendix~C]{tao2010} 
and \cite[Theorem~1.3]{smallestsingularvalue_complex}. 
When $X$ is a Ginibre matrix, the convergence of the ESD was proved in 
\cite[Theorem~6]{Sniady2002} and for an $X$ with i.i.d.\ entries, in \cite[Theorem~1.17]{tao2010}.

\begin{theorem}[Convergence of empirical spectral distribution] \label{thm:global_law} 
Let the functions $s \colon [0,1]^2 \to [0,\infty)$ and $a \colon [0,1] \to \C$ satisfy  \ref{assum:primitive_upper_lower}, \ref{assum:s_a_piecewise_continuous} and \ref{assum:a_better_than_hoelder_1_2}.  
Then there exists a unique probability measure $\sigma$ on $\C$ such that the following holds. 
For each $n \in \N$, let $X_n = (x_{ij}^{(n)})_{i,j \in \db{n}} \in \C^{n\times n}$ be a random matrix and  $A_n =(a^{(n)}_{ij})_{i,j=1}^n$ deterministic, satisfying 
\ref{assum:independent_centered} and \ref{assum:smallest_singular_value}  and \eqref{eq:a_ij_and_s_ij_relation_a_and_s}. Then  
the empirical spectral distribution $\frac{1}{n} \sum_{\zeta \in \spec(X_n + A_n)} \delta_\zeta$ 
converges to $\sigma$ 
weakly in probability as $n \to \infty$, i.e.\ 
for every bounded, continuous function $f \colon \C \to \C$ 
and $\eps >0$, we have 
\[ \lim_{n \to \infty} \P \bigg( \absbb{ \frac{1}{n} \sum_{\zeta \in \spec(X_n + A_n)} f(\zeta) - \int_{\C} 
f(\zeta) \sigma(\dd\zeta) } > \eps \bigg) = 0. \] 
Here the sum $\sum_{\zeta \in \spec(X_n + A_n)}$ is over all eigenvalues of $X_n +A_n$, counted with multiplicity. 
\end{theorem}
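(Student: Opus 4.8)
The plan is to run Girko's Hermitization together with a local law for the associated Hermitian matrices. For $\zeta\in\C$ let $\nu_n^\zeta$ denote the empirical distribution of the singular values of $X_n+A_n-\zeta$ and put $U_n(\zeta)\defeq\int_0^\infty\log x\,\nu_n^\zeta(\di x)=\frac1n\log\absb{\det(X_n+A_n-\zeta)}$. Since $\Delta_\zeta\log\abs{\lambda-\zeta}=2\pi\delta_\lambda$, the empirical spectral distribution $\mu_n\defeq\frac1n\sum_{\lambda\in\spec(X_n+A_n)}\delta_\lambda$ equals $\frac1{2\pi}\Delta U_n$ in the sense of distributions; hence it suffices to show that $U_n\to U$ in $L^1_{\mathrm{loc}}(\C)$ in probability for some deterministic $U$, because then $\sigma\defeq\frac1{2\pi}\Delta U$ is the unique weak-in-probability limit of $\mu_n$, and it is a probability measure since $\normb{X_n+A_n}\le C$ with high probability by \ref{assum:independent_centered} and \ref{assum:primitive_upper_lower}, so that $(\mu_n)$ is tight. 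Uniqueness of $\sigma$ then follows since $U$ is determined by the model, and $\sigma$ is finally identified as the Brown measure of $a+\frak c$ via \cite{AK_Brown}.

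First I would construct the deterministic limit. For fixed $\zeta$ the Hermitization $\mtwo{0 & X_n+A_n-\zeta}{(X_n+A_n-\zeta)^* & 0}$ is a Hermitian random matrix of Kronecker type in the sense of \cite{AEKN_Kronecker}, with self-energy operator built from the variance profile $s$. Assumption \ref{assum:primitive_upper_lower} — boundedness of $s$, the lower bound \eqref{eq:lower_bound_s}, and block-primitivity of $Z$ — supplies the flatness/irreducibility input guaranteeing that the corresponding matrix Dyson equation has a unique solution $M_\zeta$ on the upper half-plane whose imaginary part is bounded above and, on the self-consistent spectrum, bounded below, even when $s$ vanishes on some blocks; the Hölder continuity \ref{assum:s_a_piecewise_continuous} makes $M_\zeta$ and its normalized trace stable under the discretization \eqref{eq:a_ij_and_s_ij_relation_a_and_s}. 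Consequently $\frac1{2n}\tr M_\zeta$ converges to the Stieltjes transform of a deterministic symmetric measure whose restriction to $[0,\infty)$ is a density $\nu^\zeta$ — the self-consistent singular value density studied in \cite{AK_Brown} — and the Kronecker local law of \cite{AEKN_Kronecker} yields $\nu_n^\zeta\to\nu^\zeta$ weakly in probability, uniformly for $\zeta$ in compact sets, together with a lower-tail bound $\nu_n^\zeta([0,t])\le C(t+n^{-\gamma})$ with high probability whenever $\zeta$ lies outside the Lebesgue-null boundary $\partial\mathbb{S}$, where the density of $\nu^\zeta$ stays bounded near $0$.

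The key step is to convert $\nu_n^\zeta\to\nu^\zeta$ into $U_n(\zeta)\to U(\zeta)\defeq\int_0^\infty\log x\,\nu^\zeta(\di x)$, which needs uniform integrability of $\log x$ at both ends of the spectrum. The large-$x$ end is harmless since $\normb{X_n+A_n}\le C$ with high probability, and after truncating at a small threshold $t_0$ the weak convergence handles the bulk. For the region $x<t_0$ I would invoke \ref{assum:smallest_singular_value} with $Z=A_n-\zeta$ — which satisfies $r_0\le\absb{a(i/n)-\zeta}\le K_0$ once $\zeta$ is confined to a fixed compact set at distance at least $r_0$ from $\ran a$ — to get $\smin(X_n+A_n-\zeta)\ge n^{-1/2-\beta}$ with probability $1-O(n^{-\alpha})$, and combine it with the lower-tail bound above through a dyadic decomposition of the smallest singular values; this bounds $\frac1n\sum_{j:\,s_j<t_0}\log\tfrac1{s_j}$ by $Ct_0\log\tfrac1{t_0}+Cn^{-\gamma}\log n$ with high probability, which tends to $0$ as $n\to\infty$ and then $t_0\downarrow0$, matching the estimate $\int_0^{t_0}\abs{\log x}\,\nu^\zeta(\di x)\le Ct_0\log\tfrac1{t_0}$ for the limit. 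Hence $U_n(\zeta)\to U(\zeta)$ in probability for every $\zeta$ at distance $>r_0$ from $\ran a$.

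It remains to upgrade this to $L^1_{\mathrm{loc}}$ convergence and to cover the macroscopic neighbourhood $\mathcal N_{r_0}(\ran a)$ where \ref{assum:smallest_singular_value} is not directly applicable. The $U_n$ are subharmonic and, with high probability, bounded above on compact sets by $\log\normb{X_n+A_n}\le C$, and $\int_B U_n(\zeta)\,\di^2\zeta\ge -C_B$ holds deterministically for every ball $B$ by local integrability of $\log\abs{\lambda-\zeta}$; standard uniform-integrability arguments for logarithmic potentials (cf.\ \cite{tao2010,BordenaveChafai2012}) together with the pointwise convergence just obtained give $U_n\to U$ in $L^1_{\mathrm{loc}}$ in probability on $\{\dist(\zeta,\ran a)>r_0\}$. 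For the remaining region \ref{assum:a_better_than_hoelder_1_2} is decisive: since each $a|_{I_l}$ is $\theta$-Hölder with $\theta>1/2$, the set $\ran a=\bigcup_l\ran(a|_{I_l})$ has upper box-counting dimension at most $1/\theta<2$, so $\absb{\mathcal N_\delta(\ran a)}\lesssim\delta^{\,2-1/\theta}\to0$ as $\delta\downarrow0$; combined with the subharmonicity of $U_n$, its uniform upper bound, the deterministic lower bound on its ball-averages, and the boundedness of the density of $\sigma$ from \cite{AK_Brown} (so that $\sigma(\mathcal N_\delta(\ran a))\to0$), this lets one conclude that a shrinking neighbourhood of $\ran a$ contributes negligibly and that the $L^1_{\mathrm{loc}}$ limit is pinned down everywhere, giving $\mu_n=\frac1{2\pi}\Delta U_n\to\frac1{2\pi}\Delta U=\sigma$ weakly in probability. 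The hard part will be the construction and control of the self-consistent density $\nu^\zeta$ via the matrix Dyson equation for a block-structured Hermitization that is permitted to have vanishing variance blocks — precisely where block-primitivity \ref{assum:primitive_upper_lower} becomes indispensable — together with the delicate treatment of $\zeta$ near $\ran a$, where \ref{assum:smallest_singular_value} must be supplemented and the extra regularity \ref{assum:a_better_than_hoelder_1_2} is brought to bear.
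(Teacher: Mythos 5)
Your proposal follows the same core strategy as the paper's proof: Girko's Hermitization, the matrix Dyson equation and the Kronecker local law of \cite{AEKN_Kronecker} for the deterministic approximation of the singular value distribution, the combination of a counting bound on small singular values with \ref{assum:smallest_singular_value} to control the lower tail of the logarithm, and the $\theta$-H\"older covering of $a([0,1])$ (a $\delta$-neighbourhood of Lebesgue measure $\lesssim\delta^{2-1/\theta}$, using $\theta>1/2$) to excise the region where the smallest-singular-value input is unavailable. The one genuine methodological divergence is the concluding step: you upgrade pointwise convergence of the log-potentials $U_n$ to weak convergence of the ESD via subharmonicity and uniform integrability (the Tao--Vu/Bordenave--Chafa\"i compactness route), whereas the paper stays quantitative, proving a pointwise high-probability bound on $\Delta f(\zeta)\,h(\zeta)$ and converting it into a bound on the $\zeta$-integral by Monte Carlo sampling (Lemma~\ref{lem:monte_carlo}), which yields the explicit rate \eqref{eq:global_law_X_quantitative}; the paper also first compares to the $n$-dependent measure $\sigma^{(n)}$ from the discretised Dyson equation and only afterwards sends $\sigma^{(n)}\to\sigma$ (Corollary~\ref{cor:convergence_sigma_n_to_sigma}), which avoids needing the uniform-in-$\zeta$ rate for the discretisation error that your direct comparison of $\tfrac{1}{2n}\tr M_\zeta$ with its continuum limit would require. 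Both routes are viable. One point where your write-up is loose: you establish $U_n(\zeta)\to U(\zeta)$ only for $\dist(\zeta,\ran a)>r_0$ with the \emph{fixed} $r_0$ from \ref{assum:smallest_singular_value}, yet the final paragraph must dispose of the entire fixed neighbourhood $\{\dist(\zeta,\ran a)<r_0\}$, which has positive Lebesgue and $\sigma$-measure; the estimate $\abs{\mathcal{N}_\delta(\ran a)}\lesssim\delta^{2-1/\theta}$ only helps as $\delta\downarrow0$, so you must in fact invoke the smallest-singular-value bound at every scale $\delta\in(0,r_0)$, i.e.\ read \ref{assum:smallest_singular_value} as available for all sufficiently small $r_0$ with $r_0$-dependent constants. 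This is precisely what the paper does with its $m$-dependent covering $\Omega_m$ of radius $\sim m^{-\theta}$, so it is a presentational fix rather than a missing idea, but as written the annulus $\{\delta<\dist(\zeta,\ran a)<r_0\}$ is not covered by your argument.
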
 

 The probability measure $\sigma$ depends only on $s$ and $a$ as we will see in the proof.   
The \hyperlink{proof:thm:global_law}{proof of Theorem~\ref{thm:global_law}} is presented in Section~\ref{subsec:proof_global_law} below. 
 We note that \ref{assum:smallest_singular_value} holds if $X = A \odot Y$ is the Hadamard product of 
a derministic matrix $A \in [0,1]^{n\times n}$ and a matrix $Y \in \C^{n\times n}$ with i.i.d\ entries 
of mean zero, unit variance and a finite $4 + \eta$-moment for some $\eta>0$ due to \cite[Theorem~1.17 and Remark~1.2]{Cook2018singularValue}.  
 Assumption \ref{assum:a_better_than_hoelder_1_2} requires stronger regularity of $a$ than Assumption~\ref{assum:s_a_piecewise_continuous} and is used in our proof to ensure that the image $a([0,1]) \subset \C$  of $a$ has Lebesgue measure zero.

\begin{definition} \label{def:limiting_spectral_measure} 
The probability measure $\sigma$ from Theorem~\ref{thm:global_law} is called \emph{limiting spectral measure} 
associated with $s$ and $a$. 
\end{definition} 

\begin{remark}
The limiting spectral measure $\sigma$ equals the Brown measure of a deformed  $L^\infty[0,1]$-valued circular element within the framework of operator-valued free probability theory. This identification follows from Proposition~\ref{pro:existence_sigma_and_sigma_n} below and \cite[Propositions 4.2 and D.1]{AK_Brown}.  The measure $\sigma$
has a bounded density with respect to the Lebesgue measure on $\C$, which has a real analytic extension to a neighbourhood of $\supp \sigma$ and the boundary of $\supp \sigma$ is an at most one-dimensional real analytic variety
 \cite[Theorem~2.2]{AK_Brown}. The density of $\sigma$ is strictly positive on $\supp \sigma$ apart from its singularities which are fully characterised in \cite[Theorem~2.5]{AK_Brown}. 
We refer to \cite[Section~3]{AK_Brown} for a few examples and figures depicting the shape of $\supp \sigma$ and comparing it to some sampled eigenvalues. 
 \end{remark}

The next theorem states that the pseudospectrum of the $n \times n$-matrix $X + A$ is asymptotically given by the support
of the measure $\sigma$ from Theorem~\ref{thm:global_law} which  coincides with the spectrum of $X + A$ by Theorem~\ref{thm:global_law} in the limit $n \to \infty$. 
We first introduce the pseudospectrum of a matrix. 
For any $\eps>0$, the \emph{$\eps$-pseudospectrum} of a matrix $R \in \C^{n \times n}$ is defined as the set 
\bels{eps speudospectrum}{
 \spec_\eps(R) :=\{ \zeta \in \C \colon \norm{(R - \zeta)^{-1} } \geq \eps^{-1} \} . 
}
Note that $\spec_\eps(R)$ is monotonically increasing in $\eps$ and $\spec(R) = \cap_{\eps >0} \spec_\eps(R)$. 

Furthermore, 
for a sequence $(\Omega_n)_{n \in \N}$ of sets we use the customary definitions  
\[  
\liminf_{n \to \infty} \Omega_n := \bigcup_{N\in \N} \bigcap_{n \geq N} \Omega_n, 
\qquad \qquad  
 \limsup_{n \to \infty} \Omega_n := \bigcap_{N \in \N} \bigcup_{n \geq N} \Omega_n.  
\]

\begin{theorem}[Spectrum occupies pseudospectrum]\label{thr:Spectrum occupies pseudospectrum}
Let $s \colon [0,1]^2 \to [0,\infty)$ and $a \colon [0,1] \to \C$ satisfy  \ref{assum:primitive_upper_lower}  and \ref{assum:s_a_piecewise_continuous}.  
For each $n \in \N$, let $X_n = (x_{ij}^{(n)})_{i,j \in \db{n}} \in \C^{n\times n}$ be a random matrix satisfying 
\ref{assum:independent_centered} and let $A_n = (a_{ij}^{(n)})_{i,j \in \db{n}} \in \C^{n\times n}$ be 
a deterministic matrix. 
If \eqref{eq:a_ij_and_s_ij_relation_a_and_s} holds for all $i$, $j \in \db{n}$ then  
  there exists a monotonically increasing family $(\spec_\eps^\infty(s, a))_{\eps>0}$ of deterministic subsets  of $\C$  such that, almost surely\footnote{We assume that all $X_n$ for $n \in\N$ are realised on the same probability space.}, 
\bels{convergence of pseudospectrum}{   \limsup_{n\to \infty} \spec_\eps(X_n + A_n) \subset  \spec_{\eps}^\infty(s, a) \subset{\liminf_{n \to \infty} \spec_{\eps+\delta}(X_n + A_n)}}
hold for all $\eps,\delta>0$. 
Moreover, this family is right continuous, i.e.\  
$\cap_{\delta>0}\spec_{\eps +\delta}^\infty(s, a)=\spec_{\eps}^\infty(s, a)$ and the limiting spectral measure $\sigma$ from Theorem~\ref{thm:global_law} satisfies
\begin{equation} \label{eq:cap_spec_eps_infty_equals_supp_sigma} 
 \bigcap_{\eps >0} \spec_{\eps}^\infty(s, a) = \supp \sigma. 
\end{equation} 
\end{theorem}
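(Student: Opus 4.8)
The plan is to take
$\spec_\eps^\infty(s,a) := \{\, \zeta \in \C \colon \Delta(\zeta) \leq \eps \,\}$,
where $\Delta(\zeta) := \dist\!\big(0,\supp\hat\rho_\zeta\big)$ and $\hat\rho_\zeta$ is the self-consistent density of states on $\R$ of the Hermitization of $X_n+A_n-\zeta$ (the $2\times2$ block self-adjoint matrix with off-diagonal blocks $X_n+A_n-\zeta$ and $(X_n+A_n-\zeta)^\ast$), i.e.\ the symmetric probability measure whose Stieltjes transform solves the matrix Dyson equation determined by $s$, $a$ and $\zeta$ alone; in particular $\hat\rho_\zeta$ does not depend on $n$, and I write $\Hf^\zeta_n$ for the corresponding $2n\times2n$ matrix. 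With this choice, the monotonicity in $\eps$, the right continuity $\bigcap_{\delta>0}\spec_{\eps+\delta}^\infty(s,a)=\spec_\eps^\infty(s,a)$, and the identity $\bigcap_{\eps>0}\spec_\eps^\infty(s,a)=\{\zeta\colon\Delta(\zeta)=0\}$ are immediate, being general properties of sublevel sets of a function. The continuity of $\Delta$ I would extract from the stability and $\zeta$-analyticity of the solution of the matrix Dyson equation under \ref{assum:s_a_piecewise_continuous}, available from \cite{AEKN_Kronecker} and \cite{AK_Brown}: the weak convergence $\hat\rho_{\zeta'}\to\hat\rho_\zeta$ as $\zeta'\to\zeta$ gives upper semicontinuity of $\Delta$, while the fact that a gap of $\hat\rho_\zeta$ around $0$ is the same as an analytic continuation of its Stieltjes transform across the corresponding interval of $\R$ — which survives small perturbations of $\zeta$ — shows that $\{\Delta>r\}$ is open for every $r$, hence lower semicontinuity. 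It then remains to prove $\{\Delta=0\}=\supp\sigma$ and the two inclusions in \eqref{convergence of pseudospectrum}.

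For $\{\Delta=0\}=\supp\sigma$, on the open set $\{\Delta>0\}$ the measure $\hat\rho_\zeta$ is supported away from $0$ and depends analytically on $\zeta$, so the logarithmic potential $L(\zeta):=\int_\R\log\abs{x}\,\hat\rho_\zeta(\dd x)$ is real analytic there; since $\sigma$ is, up to a constant factor, the distributional Laplacian of $L$ — this being the identification of $\sigma$ as the Brown measure of the deformed operator-valued circular element associated with $s$ and $a$, cf.\ \cite[Propositions 4.2 and D.1]{AK_Brown} — it follows that $\sigma(\{\Delta>0\})=0$, that is $\supp\sigma\subset\{\Delta=0\}$. For the reverse inclusion I would use the structural description of $\supp\sigma$ in \cite{AK_Brown}: $\{\Delta=0\}$ coincides with the set $\{\beta\leq0\}=\supp\sigma$ there (so in particular $\hat\rho_\zeta$ cannot fail to charge a neighbourhood of $0$ on an entire open $\sigma$-null set), which can be checked with \cite[Proposition 5.16 (iv)]{AK_Brown}. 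This also yields \eqref{eq:cap_spec_eps_infty_equals_supp_sigma}.

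Both inclusions in \eqref{convergence of pseudospectrum} concern the smallest singular value $\smin(X_n+A_n-\zeta)$, equivalently the smallest modulus of an eigenvalue of $\Hf^\zeta_n$, which is $1$-Lipschitz in $\zeta$. For $\spec_\eps^\infty(s,a)\subset\liminf_n\spec_{\eps+\delta}(X_n+A_n)$ it is enough to show that, almost surely, $\limsup_n\smin(X_n+A_n-\zeta)\leq\Delta(\zeta)$ for every $\zeta\in\C$: for fixed $\zeta$ this follows from the convergence of the empirical spectral distribution of $\Hf^\zeta_n$ to $\hat\rho_\zeta$ (convergence in probability with a polynomial rate, hence almost surely by Borel--Cantelli), because $\hat\rho_\zeta$ assigns positive mass to $(\Delta(\zeta)-\eta,\Delta(\zeta)+\eta)$ for every $\eta>0$ (as $\Delta(\zeta)\in\supp\hat\rho_\zeta$ by symmetry of $\hat\rho_\zeta$) and this interval lies in $[-\Delta(\zeta)-\eta,\Delta(\zeta)+\eta]$, so $\Hf^\zeta_n$ has an eigenvalue of modulus $\leq\Delta(\zeta)+\eta$ for $n$ large, whence $\smin(X_n+A_n-\zeta)\leq\Delta(\zeta)+\eta$; the passage to all $\zeta$ simultaneously then uses the Lipschitz bound, the continuity of $\Delta$, and a countable dense set of $\zeta$'s. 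For $\limsup_n\spec_\eps(X_n+A_n)\subset\spec_\eps^\infty(s,a)$ it is enough to show that, almost surely, every compact $K\subset\{\Delta>\eps\}$ satisfies $\smin(X_n+A_n-\zeta)>\eps$ for all $\zeta\in K$ and all large $n$; since $\Delta\geq\eps+c_K$ on $K$ for some $c_K>0$, this is the assertion that $\Hf^\zeta_n$ has no eigenvalue in the macroscopic gap $(-\eps-c_K,\eps+c_K)$ of $\hat\rho_\zeta$, which I would deduce from the local law for $\Hf^\zeta_n$ underlying the proof of \ref{thm:global_law} (see \cite{AEKN_Kronecker}), applied uniformly along a fine net of $K$ and interpolated by the Lipschitz bound — here $K$ stays at a positive distance from $a([0,1])\subset\supp\sigma$, so that \ref{assum:primitive_upper_lower} keeps the relevant Hermitizations uniformly regular. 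Finally $\spec_\eps(X_n+A_n)$ is, almost surely for $n$ large, contained in a fixed ball by the standard operator-norm bound for matrices with independent entries under \ref{assum:independent_centered}, which makes the covering of $\{\Delta>\eps\}$ by such compacts $K$ a countable one.

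The hard part will be this last inclusion: establishing, with the required generality, a local law for the non-Hermitian Hermitization $\Hf^\zeta_n$ that excludes eigenvalues from its spectral gap uniformly in $\zeta$ over compact subsets of $\{\Delta>\eps\}$ — in the present setting, which permits identically zero blocks in the variance profile $s$ and assumes only piecewise $1/2$-Hölder regularity of $s$ and $a$ — together with the control of the geometry of the self-consistent gap (the continuity of $\Delta$ and the identity $\{\Delta=0\}=\supp\sigma$) as $\zeta$ approaches the boundary $\partial\supp\sigma$, where the gap of $\hat\rho_\zeta$ closes.
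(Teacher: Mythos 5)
Your proposal follows essentially the same route as the paper: $\spec_\eps^\infty(s,a)$ is defined as the sublevel set $\{\zeta\in\C : \dist(0,\supp\rho_\zeta)\le\eps\}$ of the self-consistent singular value measure, the identification of $\{\dist(0,\supp\rho_\zeta)=0\}$ with $\supp\sigma$ is delegated to \cite{AK_Brown}, the first inclusion in \eqref{convergence of pseudospectrum} rests on the no-outlier/eigenvalue-exclusion result of \cite{AEKN_Kronecker}, and the second on almost-sure weak convergence of the empirical spectral measure of the Hermitization to $\rho_\zeta$ combined with the $1$-Lipschitz dependence of the smallest singular value on $\zeta$. The one imprecision is your claim that real analyticity of the logarithmic potential on the complement of $\mathbb S_0$ yields $\sigma(\{\dist(0,\supp\rho_\zeta)>0\})=0$ — what is needed there is harmonicity, not analyticity — but since both you and the paper ultimately cite \cite{AK_Brown} for the identity \eqref{eq:overline_S_equal_intersection_S_eps}, this does not affect the validity of the argument.
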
 

The \hyperlink{proof:thr:Spectrum occupies pseudospectrum}{proof of Theorem~\ref{thr:Spectrum occupies pseudospectrum}} is given in Section~\ref{subsec:proof_theorem_spectrum_pseudospectrum} below. 
We note that the sets $(\spec_{\eps}^\infty(s,a))_{\eps>0}$ are monotonically increasing in $\eps$. 
In particular, Theorem~\ref{thr:Spectrum occupies pseudospectrum} implies that $\spec(X_n + A_n)$ is eventually 
almost surely contained in a neighbourhood of $\supp \sigma$.

In the independent work \cite{CampbellCipolloniErdosJi2024} the authors point out that for strictly positive variance profile function $s$ the inclusions \eqref{convergence of pseudospectrum} can be improved to a statement, where $\eps =\eps_n$ and $\delta  = \delta_n$ depend on $n$ with an optimal convergence rate by using the argument  in \cite[Remark 2.15]{CampbellCipolloniErdosJi2024} and using \cite[Proposition~5.16 (iv)]{AK_Brown} to verify one of their  assumptions.

\begin{remark}[Special cases for $\supp \sigma$]
In the  case when the entries of the random matrix $X_n$ are independent and identically distributed, i.e.\ when $s=t$ is a constant and $\E\1 \abs{x_{ij}}^2 = \frac{t}{n}$, 
 the well-known formula  
\begin{equation} \label{eq:formula_S_idd_case} 
\supp \sigma = \cbb{\zeta \in \C: \int_0^1\frac{\dd x}{\abs{a(x)-\zeta}^2} \geq\frac{1}{t}}\,
\end{equation} 
holds \cite{Khoruzhenko1996,tao2010}. 

 If $a = 0$ and the entries of $X_n$ are centered and independent with variances $s_{ij}= \E\abs{x_{ij}}^2$, then 
 $\supp \sigma = \{ \zeta \in \C \colon \abs{\zeta}^2 \leq \varrho(S) \}$, 
where $\varrho(S)$ denotes the spectral radius of $S =(s_{ij})_{i,j=1}^n$  \cite{Altcirc,Cook2022}. 
\end{remark}

\subsection{Notations}

We now introduce some notations used throughout. 
We write $\qq{n} \defeq \{ 1, \ldots, n\}$ for $n \in \N$. 
For $r >0$, we denote by $\mathbb{D}_r \defeq \{ z \in \C \colon \abs{z} < r \}$ the disk of radius $r$ around the origin in $\C$ and 
by $\dist(x,A) :=\inf\{ \abs{x-y} \colon y \in A \}$ the Euclidean distance of a point $x \in \C$ from a set $A \subset \C$. 

We use the convention that $c$ and $C$ denote  generic constants that may depend on the model parameters, but are otherwise     
 uniform in all other parameters, e.g.\ $n$, $\zeta$, etc.. 
For two real scalars $f$ and $g$ 
we write $f \lesssim g$ and $g \gtrsim f$ if  $f \leq C g$ for such a constant $C>0$. 
In case $f \lesssim g$ and $f \gtrsim g$ both hold,  we write $f \sim g$. 
If the constant $C$ depends on a parameter $\delta$ that is not a model parameter, we write $\lesssim_\delta$, $\gtrsim_\delta$ and 
$\sim_\delta$, respectively. 
The notation for inequality up to constant is also used for self-adjoint matrices/operators $f$ and $g$, where $f \leq C g$ is interpreted in the sense of  quadratic forms. 
For complex $f$  and $g \geq 0$  we write $f = O(g)$ in case $\abs{f} \lesssim g$. 
Analogously $f = O_\delta(g)$ expresses the fact  $\abs{f}\lesssim_\delta g$.

\section{Dyson equations and limiting measures} 

The purpose of this section is the construction of the limiting measure $\sigma$ from Theorem~\ref{thm:global_law} and the preparations of the proofs of the main results in the next section. 
We set $\mathcal B := L^\infty[0,1]$, where $[0,1]$ is equipped with the Lebesgue measure.  

In order to construct $\sigma$, given $a \in \mathcal B$ and a measurable function $s \colon [0,1]^2 \to [0,\infty)$, we consider two coupled equations for functions in $v_1, v_2 \in \cal{B}$ with $v_1>0$ and $v_2>0$, namely  
\begin{subequations} \label{eq:V_equations} 
\begin{align} 
\frac{1}{ v_1} & =  \eta +Sv_2+  \frac{\abs{\zeta-a}^2}{\eta +S^*v_1} \,, \label{eq:v1} \\ 
\frac{1}{ v_2} & =  \eta +S^*v_1+  \frac{\abs{\zeta-a}^2}{\eta +Sv_2}\,, \label{eq:v2} 
\end{align}
\end{subequations} 
for all $\eta >0$ and $\zeta \in \C$. 
The equation \eqref{eq:V_equations} is called the \emph{(vector) Dyson equation}. 
Here, $\eta$ and $\zeta$ are interpreted as the constant functions on $[0,1]$ with the respective value. 
Moreover, we assumed 
\[ \sup_{x \in [0,1]} \int_{0}^1 s(x,y) \dd y < \infty, 
\qquad \qquad \sup_{y \in [0,1]} \int_{0}^1 s(x,y) \dd x < \infty \]  
so that the two operators $S \colon \cal{B} \to \cal{B} $ and $S^* \colon \cal{B} \to  \cal{B}$ 
defined through 
\begin{equation} \label{eq:def_operators_S_and_S_star} 
(S u)(x) = \int_{0}^1 s(x,y) u(y) \dd y, \qquad (S^* u)(x) = \int_{0}^1 s(y,x) u(y) \dd y 
\end{equation} 
for all $x \in [0,1]$ and $u \in \cal{B}$ are well-defined 
bounded linear operators.  
The existence and uniqueness of $(v_1,v_2)$ is established in Lemma~\ref{lem:existence_uniqueness} below 
by relating it to a matrix-valued version of \eqref{eq:V_equations}.

\subsection{Relation to Matrix Dyson equation} \label{sec:relation_dyson_mde} 

Let $(v_1, v_2)$ be a solution of \eqref{eq:V_equations}. 
We now relate $(v_1,v_2)$ to a solution $M  \in \mathcal B^{2\times 2}$ of a matrix equation. 
To that end, we  introduce   
\begin{equation} \label{eq:structure_M} 
 y:=\frac{v_1\2( \bar a-\bar \zeta)}{\eta + S^*v_1}, \qquad 
 M := \begin{pmatrix} \ii v_1 & \ol{y} \\ y & \ii v_2 \end{pmatrix}\in \mathcal B^{2\times 2}.
\end{equation} 
Then  $\Im M := \frac{1}{2\ii} (M - M^*)$ is positive definite
 and inverting the $2\times 2$ matrix $M$ explicitly shows that $M$ satisfies the \emph{Matrix Dyson Equation (MDE)} 
\begin{equation} \label{eq:mde} 
- M^{-1} =  \begin{pmatrix} \ii \eta & \zeta - a \\ \overline{\zeta- a} & \ii \eta \end{pmatrix} 
+ \Sigma[M]. 
\end{equation} 
Here, $\Sigma \colon \mathcal B^{2\times 2} \to \mathcal B^{2\times 2}$ is defined through 
\begin{equation} \label{eq:def_Sigma} 
 \Sigma \bigg[ \begin{pmatrix} r_{11} & r_{12} \\ r_{21} & r_{22} \end{pmatrix} \bigg] 
= \begin{pmatrix} S r_{22} & 0 \\ 0 & S^* r_{11} \end{pmatrix} 
\end{equation} 
for all $r_{11}$, $r_{12}$, $r_{21}$, $r_{22} \in \mathcal B$.  
In order to see that $M$ from \eqref{eq:structure_M} satisfies \eqref{eq:mde}, we note that 
$y=\frac{v_2\2(\bar a-\bar \zeta)}{\eta + Sv_2}\,$, which is a consequence of 
$v_2(\eta + S^*v_1)=v_1(\eta + Sv_2)$. The latter follows directly from \eqref{eq:V_equations}. 
On the other hand, if $M \in \mathcal B^{2\times 2}$ with $\Im M$ positive definite is a solution of \eqref{eq:mde} 
then it is easy to see that denoting the diagonal elements of $M$ by $\ii v_1$ and $\ii v_2$ yields a solution of \eqref{eq:V_equations}.

When studying the ESD of the $n\times n$ random matrix $X + A$, it will be convenient to compare it first to a deterministic, $n$-dependent measure. 
The initial step for the definition of this measure is the following discretised version of \eqref{eq:V_equations}. 
Given $a \in \C^n$ and a matrix $S^{(n)} \in [0,\infty)^{n\times n}$ with positive entries, we also consider 
the $n$-dependent vector Dyson equation 
\begin{equation} \label{eq:V_equations_n}  
\frac{1}{v_1^{(n)}} = \eta + S^{(n)} v_2^{(n)} + \frac{\abs{a^{(n)} - \zeta}^2}{\eta + (S^{(n)})^* v_1^{(n)}}, 
\qquad 
 \frac{1}{v_2^{(n)}} = \eta + (S^{(n)})^* v_1^{(n)} + \frac{\abs{a^{(n)} - \zeta}^2}{\eta + S^{(n)} v_2^{(n)}}  
\end{equation} 
with $v_1^{(n)}$, $v_2^{(n)} \in (0,\infty)^n$.  
The existence and uniqueness of solutions to \eqref{eq:V_equations} and \eqref{eq:V_equations_n} is 
the content of the next lemma. 

\begin{lemma}[Existence and uniqueness of solutions] \label{lem:existence_uniqueness} 
For each $\eta >0$ and $\zeta \in \C$, the equations \eqref{eq:V_equations} 
and \eqref{eq:V_equations_n} have unique solutions $(v_1,v_2) \in \mathcal B_+^2$ and $(v_1^{(n)}, v_2^{(n)}) 
\in (0,\infty)^{2n}$, respectively. 
\end{lemma}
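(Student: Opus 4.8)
The plan is to treat the two Dyson equations uniformly by recognizing both as instances of the Matrix Dyson Equation \eqref{eq:mde} on an appropriate von Neumann algebra --- $\mathcal B^{2\times 2}$ with $\mathcal B = L^\infty[0,1]$ in the continuum case, and $(\C^n)^{2\times 2} \cong \C^{2n\times 2n}$ (viewed as diagonal matrices, or equivalently $\ell^\infty(\db n)$-valued $2\times2$ matrices) in the discrete case. First I would observe, as already indicated in Section~\ref{sec:relation_dyson_mde}, that a solution $(v_1,v_2)\in\mathcal B_+^2$ of \eqref{eq:V_equations} gives rise via \eqref{eq:structure_M} to a solution $M\in\mathcal B^{2\times2}$ of \eqref{eq:mde} with $\Im M>0$, and conversely the diagonal entries of any solution $M$ of \eqref{eq:mde} with $\Im M>0$ solve \eqref{eq:V_equations}; the off-diagonal entry is then forced to be $y$ as in \eqref{eq:structure_M} by reading off the $(1,2)$ and $(2,1)$ components of \eqref{eq:mde}. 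Hence existence and uniqueness for \eqref{eq:V_equations} is \emph{equivalent} to existence and uniqueness for \eqref{eq:mde} in the class $\{M : \Im M > 0\}$, and likewise for the discretised versions \eqref{eq:V_equations_n}.

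Next I would verify that the MDE \eqref{eq:mde} here falls under the general existence/uniqueness theory for matrix Dyson equations with self-energy operator (e.g.\ the framework of \cite{AEKN_Kronecker}, or the classical Helton--Far--Speicher fixed-point argument). The self-energy operator $\Sigma$ from \eqref{eq:def_Sigma} is linear, and it is positivity-preserving: if $R\ge0$ then $SR_{22}\ge0$ and $S^*R_{11}\ge0$ since $S,S^*$ have nonnegative kernel, so $\Sigma[R]\ge0$. The ``bare'' matrix in \eqref{eq:mde} has strictly positive imaginary part $\eta\,\mathbf{1}>0$ for $\eta>0$. Standard theory then yields: for each fixed $\eta>0$ and $\zeta\in\C$ there is a unique $M=M(\eta,\zeta)$ with $\Im M>0$ solving \eqref{eq:mde}; the usual route is to set up the self-consistent equation as a contraction (or monotone iteration) in the Caratheodory/Siegel upper half-space of the algebra, using that $\Sigma$ maps positive elements to positive elements and that the ``$+\mathrm i\eta$'' shift provides the necessary strict contractivity. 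Translating back through the correspondence above gives existence and uniqueness of $(v_1,v_2)\in\mathcal B_+^2$; the strict positivity $v_1,v_2>0$ (as opposed to merely $\ge0$) follows because $\Im M>0$ forces the diagonal entries $\mathrm i v_1,\mathrm i v_2$ to have strictly positive imaginary part, and an $L^\infty$-bound $v_1,v_2\le \eta^{-1}$ is immediate from \eqref{eq:V_equations} since all added terms on the right are nonnegative.

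For the discrete equation \eqref{eq:V_equations_n} the argument is literally the same with $\mathcal B$ replaced by $\C^n$ (finite-dimensional commutative algebra) and $S$ by $S^{(n)}$; here one can alternatively give a direct proof since the space is finite-dimensional --- existence by a degree/fixed-point argument on the compact convex set $\{v : 0<v_i\le\eta^{-1}\}$ (after checking the map is well-defined and continuous there, using that the denominators $\eta+\cdots$ stay bounded below by $\eta$), and uniqueness by the standard monotonicity/contraction estimate for Dyson equations. I would state the two cases in parallel and remark that the continuum case requires only the boundedness hypotheses on the row/column sums of $s$ (already assumed, ensuring $S,S^*:\mathcal B\to\mathcal B$ are bounded) so that all operations in \eqref{eq:V_equations} stay inside $\mathcal B$.

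The main obstacle I anticipate is not conceptual but the bookkeeping needed to invoke the abstract MDE theory cleanly: one must check that \eqref{eq:mde} with the specific $\Sigma$ of \eqref{eq:def_Sigma} satisfies the flatness/boundedness axioms of whichever reference is cited (in particular that $\Sigma$ is a bounded, positivity-preserving, self-adjoint-preserving linear map on the relevant $C^*$-algebra), and that the correspondence $M\leftrightarrow(v_1,v_2)$ is a genuine bijection between solution sets --- i.e.\ that \emph{every} solution $M$ of \eqref{eq:mde} with $\Im M>0$ has diagonal of the form $\mathrm i v_1,\mathrm i v_2$ with $v_1,v_2>0$ real-valued and off-diagonal exactly $y,\bar y$. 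The reality of the diagonal and the precise form of the off-diagonal entry come from exploiting the symmetry of \eqref{eq:mde} under the involution $M\mapsto -\,\Gamma M^*\Gamma$ with $\Gamma=\mathrm{diag}(1,-1)$ (or the analogous symmetry), together with uniqueness; this symmetry argument is the one place where a little care is genuinely required.
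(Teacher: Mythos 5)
Your proposal is correct and follows essentially the same route as the paper: identify solutions of the vector Dyson equations with solutions of the associated Matrix Dyson Equation (on $\mathcal B^{2\times 2}$, resp.\ $\C^{2n\times 2n}$) having positive definite imaginary part, and then invoke the general existence and uniqueness theorem of Helton, Rashidi Far and Speicher. The extra details you flag (positivity preservation of $\Sigma$, the symmetry forcing the diagonal of $M$ to be $\ii v_1,\ii v_2$ with real positive $v_1,v_2$) are exactly the points the paper delegates to Section~\ref{sec:relation_dyson_mde} and to the cited reference.
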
 

\begin{proof} 
Owing to the identification of solutions to \eqref{eq:V_equations} and \eqref{eq:mde} in Section~\ref{sec:relation_dyson_mde}, 
we can now infer the existence and uniqueness of the solution to \eqref{eq:V_equations} to the existence and 
uniqueness of the solution to \eqref{eq:mde}. 
Indeed, the latter is a  simple case of the general existence and uniqueness result 
\cite[Theorem~2.1]{HeltonRashidiFarSpeicher2007}. 
The existence and uniqueness of the solution to \eqref{eq:V_equations_n} follow analogously by introducing a matrix equation 
on $\C^{2n \times 2n}$ analogously to \eqref{eq:mde} (see \eqref{eq:mde_general_n} below with $w = \ii \eta$) 
and invoking 
\cite[Theorem~2.1]{HeltonRashidiFarSpeicher2007}. 
This proves Lemma~\ref{lem:existence_uniqueness}. 
\end{proof}

\paragraph{Matrix Dyson equation with general spectral parameter and measure $\rho_\zeta$} 
 The definition of the sets $\spec_\eps^\infty(s,a)$ from Theorem~\ref{thr:Spectrum occupies pseudospectrum} requires the spectral parameter $\ii \eta$ in 
the MDE, \eqref{eq:mde}, to be replaced by a general 
$w \in \C$ with $\Im w >0$. Given such $w$, we consider the MDE  
\begin{equation} \label{general MDE} 
- M(\zeta,w)^{-1} = \begin{pmatrix} w & \zeta - a \\  \ol{\zeta - a} & w \end{pmatrix} + \Sigma[M(\zeta,w)] 
\end{equation} 
for $\zeta \in \C$. 
Then \eqref{general MDE} has a unique solution $M(\zeta, w) \in \mathcal B^{2\times 2}$ under the constraint that $\Im M(\zeta, w) := \frac{1}{2\ii} (M(\zeta, w) - M(\zeta, w)^*)$ is 
positive definite for $\Im w > 0$ \cite[Theorem~2.1]{HeltonRashidiFarSpeicher2007}. 

By \cite[Proposition~2.1 and Definition~2.2]{AEK_Shape}, the map $w \mapsto \avg{M(\zeta, w)}$ is the Stieltjes transform of a probability measure on $\R$, where we introduced the short hand notation 
\[
\avg{R}: = \frac{1}{2}(\avg{r_{11}} + \avg{r_{22}}) \,, \qquad R= \mtwo{r_{11} & r_{12}}{r_{21} & r_{22} } \in \cal{B}^{2 \times 2}\,.
\] 
The measure $\rho_\zeta$ introduced in the next definition will turn out to be the limiting measure of 
the symmetrized singular value distribution of $X_n + A_n - \zeta$ for any $\zeta \in \C$, 
see \cite[Theorem~2.7]{AEKN_Kronecker} and Corollary~\ref{cor:singular_value_density_n_to_rho_zeta} below.

\begin{definition} \label{def:rho_zeta} 
We denote by $\rho_\zeta$ the unique probability measure on $\R$ whose Stieltjes transform is given by $w \mapsto \avg{M(\zeta, w)}$. 
 We call $\rho_\zeta$ the limiting singular value measure.  
 Through $\rho_\zeta$ we define
\bels{eq:def_S_eps}{
 \mathbb S_\eps:= \{\zeta \in \C: \dist(0, \supp \rho_\zeta)\le  \eps\}
}
for any $\eps\geq 0$.
\end{definition}

The set $\spec_\eps^\infty(s,a)$ from Theorem~\ref{thr:Spectrum occupies pseudospectrum} is identified 
with $\mathbb S_\eps$ from \eqref{eq:def_S_eps} in the \hyperlink{proof:thr:Spectrum occupies pseudospectrum}{proof of Theorem~\ref{thr:Spectrum occupies pseudospectrum}} (see \eqref{eq:def_spec_eps_infty_s_a} below). 
This set is the $n \to \infty$ limit of the $\eps$-pseudospectrum  \eqref{eps speudospectrum}  for $R=X_n+A_n$.

\begin{remark}\label{rmk:S eps}
The sets $\mathbb S_\eps$ defined in \eqref{eq:def_S_eps} are 
monotonously nondecreasing in $\eps\geq 0$, i.e.\  $\mathbb S_{\eps_1} \subset \mathbb S_{\eps_2}$ if $\eps_1 \le \eps_2$. 
Moreover, they are bounded, in fact, $\mathbb S_{\eps} \subset \{\zeta \in \C: \abs{\zeta} \le \eps + \norm{a}_\infty + 2 (\norm{S}_{\infty})^{1/2}\}$ for all $\eps \geq 0$ as a consequence of \cite[Proposition~2.1]{AEK_Shape}.
 Here, $\norm{S}_\infty$ denotes the operator norm of $S$ viewed as an operator from $\mathcal B$ to $\mathcal B$.  
\end{remark}

\paragraph{Bounds on $v$ and $v^{(n)}$} 

Throughout the following, given $a \in \mathcal B$ and $s \colon [0,1]^2 \to [0,\infty)$, we set 
\begin{equation} \label{eq:S_n_a_n_discretised} 
S^{(n)} := \bigg(\frac{1}{n} s\bigg(\frac{i}{n},\frac{j}{n}\bigg)\bigg)_{i,j \in \db{n}} \in [0,\infty)^{n\times n}, \qquad a^{(n)} := \bigg(a\bigg(\frac{i}{n} \bigg)\bigg)_{i \in \db{n}} \in \C^n. 
\end{equation} 
and consider $(v_1^{(n)}, v_2^{(n)})$ the solution of \eqref{eq:V_equations_n} with these choices of $S^{(n)}$ and 
$a^{(n)}$. 

\begin{lemma}[Bounds on $\avg{v_1}$] \label{lem:v_bounds}
Let  $a \in \mathcal B$ and  $s$ satisfy \ref{assum:primitive_upper_lower}. 
Then, uniformly for $\zeta \in \C$ 
and $\eta>0$, we have 
\begin{equation} \label{eq:avg_M_bounded} 
 0 \leq \avg{v_1(\zeta,\eta)} \lesssim \frac{1}{1 + \eta}. 
\end{equation} 

Furthermore, uniformly for any $T >0 $ and $\zeta \in \C$, we have 
\begin{equation} \label{eq:integral_bounds} 
\int_0^T \absbb{\avg{v_1 (\zeta, \eta)} - \frac{1}{1 + \eta} } \dd \eta 
\lesssim \min \Big\{ T , \sqrt{1 + \abs{\zeta}} \Big\}, 
\qquad 
 \int_T^\infty \absbb{\avg{v_1(\zeta,\eta)} - \frac{1}{1 + \eta} } \dd \eta 
\lesssim \frac{1+\abs{\zeta}}{T}. 
\end{equation}  
The same estimates hold uniformly for $n \in \N$ when $v_1$ is replaced by $v_1^{(n)}$ from \eqref{eq:V_equations_n}, 
where $\avg{v} = \frac{1}{n} \sum_{i\in \db{n}} v_i$ for $v = (v_i)_{i \in \db{n}} \in \C^n$.  
\end{lemma}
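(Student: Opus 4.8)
The plan is to reduce both bounds to the exact identity
\[
\avg{v_1(\zeta,\eta)}\;=\;\Im\avg{M(\zeta,\ii\eta)}\;=\;\eta\int_\R\frac{\rho_\zeta(\dd\lambda)}{\lambda^2+\eta^2}
\]
and to elementary properties of the measure $\rho_\zeta$ from Definition~\ref{def:rho_zeta}. To obtain this identity I would first record that $\avg{v_1(\zeta,\eta)}=\avg{v_2(\zeta,\eta)}$: as noted in Section~\ref{sec:relation_dyson_mde}, \eqref{eq:v1} and \eqref{eq:v2} yield $v_2(\eta+S^*v_1)=v_1(\eta+Sv_2)$, i.e.\ $\eta v_2+v_2\,S^*v_1=\eta v_1+v_1\,Sv_2$; applying $\avg{\genarg}$ and using the self-adjointness of $S$ for the $L^2[0,1]$-pairing underlying $\avg{\genarg}$ (so $\avg{v_2\,S^*v_1}=\avg{v_1\,Sv_2}$) leaves $\eta\avg{v_2}=\eta\avg{v_1}$. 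Then, by the parametrisation \eqref{eq:structure_M} the diagonal entries of the solution $M(\zeta,\ii\eta)$ of \eqref{eq:mde} are $\ii v_1$ and $\ii v_2$, so $\avg{M(\zeta,\ii\eta)}=\tfrac{\ii}{2}(\avg{v_1}+\avg{v_2})=\ii\avg{v_1}$; since $w\mapsto\avg{M(\zeta,w)}$ is the Stieltjes transform of $\rho_\zeta$, comparing imaginary parts gives the identity above. The same computation applies verbatim to \eqref{eq:V_equations_n}, with $M$ replaced by the solution $M^{(n)}(\zeta,\ii\eta)$ of the $2n\times 2n$ matrix equation from the proof of Lemma~\ref{lem:existence_uniqueness} and $\rho_\zeta$ by the probability measure $\rho_\zeta^{(n)}$ with Stieltjes transform $w\mapsto\avg{M^{(n)}(\zeta,w)}$; since the discretisations \eqref{eq:S_n_a_n_discretised} inherit \ref{assum:primitive_upper_lower} with the same block pattern, primitivity index and constant $c$, every bound invoked below holds uniformly in $n$, and I write the argument only for $v_1$.

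\textbf{Proof of \eqref{eq:avg_M_bounded}.} Nonnegativity is clear since $v_1>0$, and for $\eta\ge 1$ the bound is immediate: $1/v_1\ge\eta$ pointwise by \eqref{eq:v1}, so $0\le\avg{v_1}\le 1/\eta\le 2/(1+\eta)$. For $0<\eta\le 1$ I would split on $\abs\zeta$. If $\abs\zeta$ exceeds a suitable model constant, then $\dist(0,\supp\rho_\zeta)\gtrsim\abs\zeta$ by \cite[Proposition~2.1]{AEK_Shape} (this is the content of Remark~\ref{rmk:S eps}), hence $\avg{v_1}\le\eta\,\dist(0,\supp\rho_\zeta)^{-2}\lesssim\eta\abs\zeta^{-2}\lesssim 1\lesssim (1+\eta)^{-1}$. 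If $\abs\zeta$ is bounded, the spectral parameter in \eqref{general MDE} stays in a bounded set and the standard a priori bound for the solution of the MDE with a block-primitive self-energy — this is exactly where \ref{assum:primitive_upper_lower} enters, cf.\ \cite{AEKN_Kronecker} — gives $\norm{M(\zeta,\ii\eta)}\lesssim 1$, so $\avg{v_1}\le\norm{M(\zeta,\ii\eta)}\lesssim 1\lesssim(1+\eta)^{-1}$. Combining these regimes proves \eqref{eq:avg_M_bounded}.

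\textbf{Proof of \eqref{eq:integral_bounds}.} Since $\rho_\zeta$ is a probability measure,
\[
\avg{v_1(\zeta,\eta)}-\frac1\eta\;=\;\int_\R\Big(\frac{\eta}{\lambda^2+\eta^2}-\frac1\eta\Big)\rho_\zeta(\dd\lambda)\;=\;-\frac1\eta\int_\R\frac{\lambda^2}{\lambda^2+\eta^2}\,\rho_\zeta(\dd\lambda)\,,
\]
so $\absb{\avg{v_1}-1/\eta}\le\mu_2(\zeta)\,\eta^{-3}$ with $\mu_2(\zeta):=\int_\R\lambda^2\rho_\zeta(\dd\lambda)$, and $\mu_2(\zeta)\lesssim(1+\abs\zeta)^2$ because $\supp\rho_\zeta\subset[-C(1+\abs\zeta),C(1+\abs\zeta)]$ by \cite[Proposition~2.1]{AEK_Shape}. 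Together with $\absb{1/\eta-1/(1+\eta)}=(\eta(1+\eta))^{-1}\le\eta^{-2}$ this gives, for all $\eta>0$,
\[
\absbb{\avg{v_1(\zeta,\eta)}-\frac1{1+\eta}}\;\le\;\frac{(1+\abs\zeta)^2}{\eta^3}+\frac1{\eta^2}\,,
\]
while \eqref{eq:avg_M_bounded} and $(1+\eta)^{-1}\le\min\{1,1/\eta\}$ yield the complementary bound $\absb{\avg{v_1}-1/(1+\eta)}\lesssim\min\{1,1/\eta\}$. Both estimates in \eqref{eq:integral_bounds} then follow by splitting the $\eta$-integral at the thresholds $1$ and $1+\abs\zeta$: using $\min\{1,1/\eta\}$ on $(0,1+\abs\zeta)$ and $(1+\abs\zeta)^2\eta^{-3}+\eta^{-2}$ on $(1+\abs\zeta,\infty)$ gives $\int_0^\infty\absb{\avg{v_1}-1/(1+\eta)}\dd\eta\lesssim 1+\log(1+\abs\zeta)\lesssim\sqrt{1+\abs\zeta}$ and, after splitting $\int_T^\infty$ further at $1+\abs\zeta$ when $T<1+\abs\zeta$, $\int_T^\infty\absb{\avg{v_1}-1/(1+\eta)}\dd\eta\lesssim(1+\abs\zeta)/T$; finally $\int_0^T\absb{\avg{v_1}-1/(1+\eta)}\dd\eta\lesssim\int_0^T(1+\eta)^{-1}\dd\eta=\log(1+T)\le T$, which gives the $\min$.

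\textbf{Main difficulty.} I expect the only genuinely non-elementary ingredient to be the a priori bound $\norm{M(\zeta,\ii\eta)}\lesssim 1$ for bounded spectral parameter, equivalently the boundedness of $\avg{v_1(\zeta,\eta)}$ as $\eta\downarrow 0$ for $\zeta$ in or near $\supp\sigma$; this is where the block-primitivity and lower bounds of \ref{assum:primitive_upper_lower} are essential and is supplied by the shape-analysis results for MDEs with block-structured self-energy in \cite{AEKN_Kronecker} (and \cite{AEK_Shape}). Everything else is the reduction to the scalar identity for $\avg{v_1}$ together with routine estimates, carried out uniformly in $n$ for the discretised system.
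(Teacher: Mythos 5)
Your proof is correct in substance, but it takes a genuinely different route from the paper: the paper's proof of Lemma~\ref{lem:v_bounds} is a two-line citation of \cite[Lemma~6.1]{AK_Brown} (after checking that \ref{assum:primitive_upper_lower} implies the hypotheses there, also in the discretised setting), whereas you rederive the integral bounds \eqref{eq:integral_bounds} from scratch via the Stieltjes-transform identity $\avg{v_1(\zeta,\eta)}=\eta\int_\R(\lambda^2+\eta^2)^{-1}\rho_\zeta(\dd\lambda)$, the symmetry $\avg{v_1}=\avg{v_2}$, and the support bound $\supp\rho_\zeta\subset[-C(1+\abs{\zeta}),C(1+\abs{\zeta})]$ from \cite[Proposition~2.1]{AEK_Shape}. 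Your reduction is clean and all the elementary estimates check out (the large-$\eta$ tail via the second moment $\mu_2(\zeta)\lesssim(1+\abs{\zeta})^2$, the large-$\abs{\zeta}$ regime via $\dist(0,\supp\rho_\zeta)\gtrsim\abs{\zeta}$ as in Remark~\ref{rmk:S eps}, and the splitting of the $\eta$-integrals, which in fact yields the stronger bound $1+\log(1+\abs{\zeta})$ in place of $\sqrt{1+\abs{\zeta}}$); what it buys is transparency about which inputs are elementary and which are not. The one genuinely non-elementary input, as you correctly identify, is the boundedness of $\avg{v_1(\zeta,\eta)}$ as $\eta\downarrow 0$ for bounded $\zeta$; here your attribution is slightly off: the uniform bound $\norm{M(\zeta,\ii\eta)}\lesssim 1$ under mere block-primitivity (with zero blocks allowed in $s$) is precisely the content of the result the paper cites, \cite[Lemma~6.1]{AK_Brown}, rather than of the flatness-based bounds in \cite{AEKN_Kronecker}, so your argument is not independent of the paper's reference at this point. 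Note also that for the averaged quantity you actually need less than $\norm{M}\lesssim 1$: multiplying \eqref{eq:v1} by $v_1$ gives $v_1\,Sv_2\le 1$ pointwise, and when $s\geq c$ on all of $[0,1]^2$ this yields $c\,\avg{v_1}\avg{v_2}\le\avg{v_1\,Sv_2}\le 1$, hence $\avg{v_1}\le c^{-1/2}$ by $\avg{v_1}=\avg{v_2}$; extending this to the block-primitive case is essentially what \cite[Lemma~6.1]{AK_Brown} does.
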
 

\begin{proof} 
Owing to \ref{assum:primitive_upper_lower}, the bounds on $\avg{v_1(\zeta,\eta)}$ follow directly from  
\cite[Lemma~6.1]{AK_Brown}.  
For $\avg{v_{1}^{(n)}(\zeta,\eta)}$, we note that 
 \cite[Assumption~A1]{AK_Brown}  
 is satisfied with $\mathfrak X = \db{n}$ and $\mu$ the counting measure on $\mathfrak X$ due to \ref{assum:primitive_upper_lower} and \eqref{eq:S_n_a_n_discretised}. 
Therefore, the bounds on $\avg{v_{1}^{(n)}(\zeta,\eta)}$ also follow from   
\cite[Lemma~6.1]{AK_Brown}. 
\end{proof}

\subsection{Limiting spectral measure and its support} 

In this subsection, we introduce the limiting spectral measure $\sigma$ and an $n$-dependent measure $\sigma^{(n)}$ 
that will turn out to approximate $\sigma$ when $n$ tends to infinity. 

The bounds in \eqref{eq:integral_bounds} for $v_1$ and $v_1^{(n)}$ imply that, for each $\zeta \in \C$, 
the integrals 
\begin{equation} \label{eq:def_L} 
 L(\zeta) := \int_0^\infty \bigg(\avg{v_1(\zeta, \eta)} - \frac{1}{1 + \eta} \bigg) \dd \eta, \qquad 
 L^{(n)}(\zeta) := \int_0^\infty \bigg(\avg{v_1^{(n)}(\zeta, \eta)} - \frac{1}{1 + \eta} \bigg) \dd \eta 
\end{equation} 
exist in the Lebesgue sense, where $v_1$ and $v_1^{(n)}$ are the unique solutions of \eqref{eq:V_equations} 
and \eqref{eq:V_equations_n}, respectively. 

In the proof of Theorem~\ref{thm:global_law} in Section~\ref{subsec:proof_global_law} below, we relate this definition to the limiting measure 
of the empirical spectral distribution. In particular, we refer to \eqref{eq:spectral_statistics_log_determinant}, 
\eqref{eq:log_determinant_integral} and Proposition~\ref{pro:global_law_H} below.  
 The following two propositions recall results from \cite{AK_Brown} about how to define the limiting spectral measure $\sigma$ in terms of the Laplacian of $L$ from \eqref{eq:def_L} as well as about its density and  support, respectively. For the proof of these statements we point to the corresponding results from \cite{AK_Brown}.  

\begin{proposition}[Existence of $\sigma$ and $\sigma^{(n)}$] 
\label{pro:existence_sigma_and_sigma_n} 
If $a \in \mathcal B$, $s$ satisfies \ref{assum:primitive_upper_lower} 
and $a^{(n)}$ and $S^{(n)}$ are chosen as in \eqref{eq:S_n_a_n_discretised} 
then the following holds. 
\begin{enumerate}[label=(\roman*)] 
\item \label{item:L_sigma} 
There is a unique probability measure $\sigma$ on $\C$ such that 
\begin{equation} \label{eq:sigma_L_identity} 
 \int_{\C} f(\zeta ) \sigma(\dd \zeta) = -\frac{1}{2\pi} \int_{\C} \Delta f(\zeta) L(\zeta ) \dd^2 \zeta 
\end{equation} 
for all $f \in C_0^2 (\C)$, where $\dd^2 \zeta$ denotes the Lebesgue measure on $\C$.  
\item \label{item:L_n_sigma_n} 
For each $n \in\N$, there is a unique probability measure $\sigma^{(n)}$ on $\C$ such that 
\eqref{eq:sigma_L_identity} holds when $\sigma$ and $L$ are replaced by $\sigma^{(n)}$ and $L^{(n)}$ from \eqref{eq:def_L}, respectively. 
Furthermore, there is $\varphi \sim 1$ such that $\supp \sigma^{(n)} \subset \mathbb{D}_\varphi$ for all $n \in \N$.
\end{enumerate} 
\end{proposition}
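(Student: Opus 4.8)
The plan is to reduce everything to the logarithmic potential / Laplacian construction and to the already-established integrability bounds. For part~\ref{item:L_sigma}, I would first note that Lemma~\ref{lem:v_bounds} guarantees $L(\zeta)$ exists in the Lebesgue sense for every $\zeta$ and, via the second bound in \eqref{eq:integral_bounds}, that $L(\zeta) = O(\log(1+\abs\zeta))$ or at worst polynomially bounded, hence $L \in L^1_{\mathrm{loc}}(\C)$ and defines a distribution. The right-hand side of \eqref{eq:sigma_L_identity} then defines a linear functional $f \mapsto -\tfrac1{2\pi}\int_\C \Delta f \, L$ on $C_0^2(\C)$; the content is to show this functional is given by integration against a probability measure. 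This is exactly the structure of a Green's-function / logarithmic-potential argument: one identifies $L(\zeta)$ with (the real part of) a regularised log-determinant, namely $L(\zeta) = \int_0^\infty \bigl(\avg{v_1(\zeta,\eta)} - \tfrac1{1+\eta}\bigr)\dd\eta$ should be recognised — using the relation between $\avg{v_1}$ and the Stieltjes transform of the symmetrised singular value measure $\rho_\zeta$ of Definition~\ref{def:rho_zeta} — as $-\int_\R \log\abs{x}\,\rho_\zeta(\dd x) + \mathrm{const}$, i.e.\ as a (shifted) logarithmic potential in the spectral variable. Positivity and mass~$1$ of $\sigma$ then follow from the general theory (as in \cite[Proposition~4.2]{AK_Brown} and the Brown-measure construction), since $-\frac1{2\pi}\Delta$ applied to such a potential is a probability measure. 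I would simply invoke the corresponding statement from \cite{AK_Brown}: under \ref{assum:primitive_upper_lower} the hypotheses of \cite[Assumption~A1]{AK_Brown} hold (with $\mathfrak X = [0,1]$, Lebesgue measure), so \cite[Proposition~4.2]{AK_Brown} yields existence and uniqueness of $\sigma$ satisfying \eqref{eq:sigma_L_identity}. Uniqueness is immediate because two probability measures with the same action on $C_0^2(\C)$ coincide.

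For part~\ref{item:L_n_sigma_n}, the argument is verbatim the same with the discretised data: by the remark already made in the proof of Lemma~\ref{lem:v_bounds}, \cite[Assumption~A1]{AK_Brown} holds with $\mathfrak X = \db{n}$ and $\mu$ the counting measure, and \eqref{eq:S_n_a_n_discretised}, so \cite[Proposition~4.2]{AK_Brown} applies and produces a probability measure $\sigma^{(n)}$ with the analogue of \eqref{eq:sigma_L_identity}. The only genuinely new point is the uniform support bound $\supp\sigma^{(n)} \subset \mathbb D_\varphi$ with $\varphi \sim 1$. For this I would use that $\sigma^{(n)}$ is the empirical measure counterpart's limit and, more directly, that $\supp\sigma^{(n)}$ can be controlled through $\supp\rho_\zeta^{(n)}$: by Remark~\ref{rmk:S eps} (which cites \cite[Proposition~2.1]{AEK_Shape}), $\rho_\zeta$ — and its $n$-dependent analogue — is supported in a bounded set whenever $\abs\zeta > \norm a_\infty + 2\norm S_\infty^{1/2}$, in fact $0 \notin \supp\rho_\zeta$ there, so $L^{(n)}$ is harmonic (smooth) outside $\mathbb D_{\norm a_\infty + 2\sqrt{\norm S_\infty}}$, forcing $\Delta L^{(n)} = 0$ there and hence $\sigma^{(n)}\bigl(\C\setminus\mathbb D_\varphi\bigr) = 0$ for $\varphi = \norm a_\infty + 2\sqrt{\norm S_\infty} + 1$, say. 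Crucially all constants here depend only on the model parameters (via \ref{assum:primitive_upper_lower}: $\norm a_\infty \le 1/c$, $\norm S_\infty \le 1/c$) and not on $n$, which gives $\varphi \sim 1$ uniformly. Alternatively one can quote \cite[Proposition~5.16 (iv)]{AK_Brown} or the analogous uniform-in-$n$ support statement from \cite{AK_Brown} directly.

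The main obstacle is not conceptual but bookkeeping: making sure the identification of $L$ (resp.\ $L^{(n)}$) with a logarithmic potential of $\rho_\zeta$ (resp.\ $\rho_\zeta^{(n)}$) is legitimate — i.e.\ that the $\eta$-integral converges, that one may exchange $\Delta$ with the $\eta$-integral in the distributional sense, and that the ``constant'' $\int_0^\infty\bigl(\tfrac1{1+\eta} - \tfrac{\eta}{1+\eta^2}\bigr)\dd\eta$-type terms one subtracts are genuinely $\zeta$-independent so they drop out under $\Delta$. All of these are handled by the integrability estimates \eqref{eq:integral_bounds} of Lemma~\ref{lem:v_bounds} together with standard dominated-convergence and Fubini arguments, exactly as in the cited results of \cite{AK_Brown}; since the excerpt explicitly says ``For the proof of these statements we point to the corresponding results from \cite{AK_Brown},'' the proof I would write is essentially a short paragraph verifying that \ref{assum:primitive_upper_lower} and \eqref{eq:S_n_a_n_discretised} put us in the setting of \cite[Assumption~A1]{AK_Brown}, then citing \cite[Proposition~4.2]{AK_Brown} for \ref{item:L_sigma} and \ref{item:L_n_sigma_n}, and finally deriving the uniform support bound from Remark~\ref{rmk:S eps} (equivalently \cite[Proposition~2.1]{AEK_Shape}) as above.
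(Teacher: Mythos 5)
Your proposal is correct and follows essentially the same route as the paper: both parts are reduced to \cite[Proposition~4.2(i)]{AK_Brown} after checking that \ref{assum:primitive_upper_lower} and \eqref{eq:S_n_a_n_discretised} place the continuous and discretised data within \cite[Assumption~A1]{AK_Brown} (with constants uniform in $n$), and the uniform support bound comes from Remark~\ref{rmk:S eps}. Your explicit harmonicity argument for $\supp \sigma^{(n)} \subset \mathbb{D}_\varphi$ is just an unpacking of what the paper delegates to \cite[Corollary~6.4]{AK_Brown}, so no substantive difference remains.
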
 

\begin{proof} 
 Part \ref{item:L_sigma} is a direct consequence of \cite[Proposition~4.2(i)]{AK_Brown} because of \ref{assum:primitive_upper_lower}. 
Clearly, $a^{(n)}$ and $S^{(n)}$ from \eqref{eq:S_n_a_n_discretised} satisfy 
 $\norm{a^{(n)}}_\infty \lesssim 1$  and  \cite[Assumption~A1]{AK_Brown}  (with the same constants as $a$ and $s$).  
Hence,  \cite[Lemma~6.2(i), Proposition~4.2(i), Corollary~6.4]{AK_Brown}  and Remark~\ref{rmk:S eps}  
imply the well-definedness of $L^{(n)}$ and the existence of probability measures $\sigma^{(n)}$ satisfying 
\eqref{eq:sigma_L_identity} for $L^{(n)}$ as well as $\supp \sigma^{(n)} \subset  \mathbb{D}_\varphi$, respectively.   
\end{proof}

\begin{proposition}[Density and support of $\sigma$] \label{pro:limiting_measure_and_S} 
Let $a$ and $s$ satisfy \ref{assum:primitive_upper_lower} and \ref{assum:s_a_piecewise_continuous}. 
Then the measure $\sigma$ from Proposition~\ref{pro:existence_sigma_and_sigma_n} 
has a bounded density 
with respect to the Lebesgue measure $\dd^2\zeta$ on $\C$, i.e.\ 
$\sigma(\dd \zeta) = \sigma(\zeta) \dd^2 \zeta$ for some bounded, measurable function $\sigma \colon \C\to [0,\infty)$. 
Moreover, 
\begin{equation} \label{eq:overline_S_equal_intersection_S_eps} 
\supp \sigma =  \mathbb S_0 . 
\end{equation} 
\end{proposition}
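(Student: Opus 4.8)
The plan is to derive the statement from the already-cited regularity results in \cite{AK_Brown} by first checking that the present hypotheses \ref{assum:primitive_upper_lower} and \ref{assum:s_a_piecewise_continuous} match the assumptions under which those results are proved. Concretely, \ref{assum:primitive_upper_lower} gives \cite[Assumption~A1]{AK_Brown} (block-primitivity, boundedness, and the lower bound \eqref{eq:lower_bound_s}), while \ref{assum:s_a_piecewise_continuous} supplies the piecewise $1/2$-Hölder continuity of $s$ and $a$ that \cite{AK_Brown} uses for the regularity of the Brown measure. Once this identification is in place, the boundedness of the density and the fact that $\sigma(\dd\zeta) = \sigma(\zeta)\dd^2\zeta$ follow verbatim from \cite[Theorem~2.2]{AK_Brown} (or the corresponding intermediate statement), since by Proposition~\ref{pro:existence_sigma_and_sigma_n}\ref{item:L_sigma} our $\sigma$ is exactly the measure defined there through the Laplacian of $L$.

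For the support identity \eqref{eq:overline_S_equal_intersection_S_eps}, the strategy is to connect $\supp\sigma$ to the function $\beta$ mentioned in the introduction and to the measure $\rho_\zeta$ of Definition~\ref{def:rho_zeta}. The key analytic fact, taken from \cite{AK_Brown}, is that $\zeta\in\supp\sigma$ if and only if $0\in\supp\rho_\zeta$, i.e.\ if and only if $\dist(0,\supp\rho_\zeta)=0$, which is precisely the defining condition for $\mathbb S_0$ in \eqref{eq:def_S_eps}. The inclusion $\supp\sigma\subset\mathbb S_0$ is the "soft" direction: away from $\mathbb S_0$ one has $\dist(0,\supp\rho_\zeta)>\delta$ locally, so $w\mapsto\avg{M(\zeta,w)}$ extends analytically across $w=0$, the integrand $\avg{v_1(\zeta,\eta)}-\frac{1}{1+\eta}$ in \eqref{eq:def_L} is real-analytic in $\zeta$ there, hence $L$ is harmonic on that region and $\Delta L = 0$, giving $\sigma=0$ there by \eqref{eq:sigma_L_identity}; taking complements and using that $\supp\sigma$ is closed yields $\supp\sigma\subset\mathbb S_0$. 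The reverse inclusion $\mathbb S_0\subset\supp\sigma$ is where the density estimates enter: on the open set $\mathbb S:=\{\beta<0\}$ the density is shown in \cite{AK_Brown} to be strictly positive, and one checks $\mathbb S_0 = \overline{\mathbb S}$ (equivalently $\Int\mathbb S_0 = \mathbb S$ and $\partial\mathbb S_0=\{\beta=0\}$) using the real-analyticity of $\beta$ near $\partial\mathbb S$; then $\mathbb S\subset\supp\sigma$ and taking closures gives $\mathbb S_0=\overline{\mathbb S}\subset\supp\sigma$.

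I expect the main obstacle to be the second inclusion $\mathbb S_0\subset\supp\sigma$, specifically ruling out that $\sigma$ vanishes on some of the boundary stratum $\partial\mathbb S$ or on lower-dimensional pieces where the density could a priori degenerate. This is exactly the content of the singularity classification \cite[Theorem~2.5]{AK_Brown}: even at the zeros of the density inside $\supp\sigma$, the growth is quadratic on a two-sided cone, so such zeros are isolated (or at least form a null set not disconnecting the support), and at the edge the density has a strictly positive one-sided limit. One must therefore invoke these structural results to conclude that $\supp\sigma$ has no "holes" and equals the closure of the positivity set $\mathbb S$. The remaining bookkeeping — verifying that $\mathbb S_0$ (a condition on $\rho_\zeta$) and $\overline{\mathbb S}$ (a condition on $\beta$) genuinely coincide, and matching the two parametrizations of the transition region used in \cite{AK_Brown} — is routine once the dictionary between $\rho_\zeta$, $v_1$, and $\beta$ is written out, but it is the part that requires care.
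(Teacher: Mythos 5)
Your proposal is correct and follows essentially the same route as the paper: both the bounded density and the identity $\supp\sigma=\mathbb S_0$ are obtained by verifying that \ref{assum:primitive_upper_lower} and \ref{assum:s_a_piecewise_continuous} put you in the setting of \cite{AK_Brown} and then citing \cite[Theorem~2.2(i)]{AK_Brown} together with the support characterisation there (the paper points to \cite[eq.~(6.33)]{AK_Brown}, which is exactly the dictionary between $\supp\sigma$, the positivity set $\{\beta<0\}$ and the condition $0\in\supp\rho_\zeta$ that you sketch). Your additional outline of the mechanism inside \cite{AK_Brown} is consistent with that reference and does not change the argument.
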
 

\begin{proof}  
The existence of the density is stated in \cite[Theorem~2.2(i)]{AK_Brown}. 
The identity \eqref{eq:overline_S_equal_intersection_S_eps} follows from \cite[Theorem~2.2(i) and (6.33)]{AK_Brown}. 
\end{proof}

\subsection{Approximating spectral measure and singular value measure} 

Throughout the following, $S^{(n)}$ and $a^{(n)}$ are chosen as in \eqref{eq:S_n_a_n_discretised}
and, with these choices, $v_1^{(n)}$, $v_2^{(n)}$, $L^{(n)}$ and $\sigma^{(n)}$ are the associated objects from \eqref{eq:V_equations_n},
 \eqref{eq:def_L} and Proposition~\ref{pro:existence_sigma_and_sigma_n} \ref{item:L_n_sigma_n}, respectively. 

The next corollary states the promised convergence of $\sigma^{(n)}$ to $\sigma$. It follows readily from 
Lemma~\ref{lem:discretizing_dyson_eq} below. We will present the detailed \hyperlink{proof:cor:convergence_sigma_n_to_sigma}{proof of Corollary~\ref{cor:convergence_sigma_n_to_sigma}} in Section~\ref{sec:discretizing_Dyson_equation} 
below. 

\begin{corollary} \label{cor:convergence_sigma_n_to_sigma} 
If $s$ and $a$ satisfy \ref{assum:s_a_piecewise_continuous} and \ref{assum:primitive_upper_lower} then 
$\sigma^{(n)}$ converges to $\sigma$ weakly as $n$ tends to infinity. 
\end{corollary}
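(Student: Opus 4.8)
The plan is to derive the convergence of $\sigma^{(n)}$ to $\sigma$ from the convergence of the associated functions $L^{(n)}$ to $L$, using the identity \eqref{eq:sigma_L_identity} that expresses each measure as (minus) the distributional Laplacian of the corresponding $L$. Since a sequence of probability measures converges weakly once we know that the integrals against a dense class of test functions converge (and, in this case, all the $\sigma^{(n)}$ are supported in the fixed disk $\mathbb{D}_\varphi$ by Proposition~\ref{pro:existence_sigma_and_sigma_n}\ref{item:L_n_sigma_n}, so tightness is automatic), it suffices to show that for every $f \in C_0^2(\C)$
\[
 -\frac{1}{2\pi} \int_\C \Delta f(\zeta) L^{(n)}(\zeta)\, \dd^2\zeta \;\longrightarrow\; -\frac{1}{2\pi} \int_\C \Delta f(\zeta) L(\zeta)\, \dd^2\zeta
\]
as $n \to \infty$. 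Because $\Delta f$ is bounded with compact support, this in turn follows once I establish that $L^{(n)}(\zeta) \to L(\zeta)$ for (Lebesgue-a.e., or every) $\zeta \in \C$, together with a uniform-in-$n$ local integrability bound on $L^{(n)}$ that legitimizes dominated convergence on the (bounded) support of $\Delta f$.

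The uniform integrability is essentially already in hand: the bounds \eqref{eq:integral_bounds} of Lemma~\ref{lem:v_bounds} hold uniformly in $n$ for $v_1^{(n)}$, and splitting the $\eta$-integral defining $L^{(n)}(\zeta)$ at $T = 1$ and using the two estimates gives $|L^{(n)}(\zeta)| \lesssim 1 + |\zeta|$ uniformly in $n$, which is locally bounded in $\zeta$ and hence dominates $|\Delta f(\zeta) L^{(n)}(\zeta)|$ by an integrable function on the compact set $\supp(\Delta f)$. So the real content is the pointwise (or a.e.) convergence $L^{(n)}(\zeta) \to L(\zeta)$, and this is exactly what Lemma~\ref{lem:discretizing_dyson_eq} — referenced in the statement as the tool from which the corollary ``follows readily'' — is designed to supply: it compares the discretized vector Dyson equation \eqref{eq:V_equations_n} with its continuum counterpart \eqref{eq:V_equations}, using the piecewise $1/2$-Hölder continuity of $s$ and $a$ from \ref{assum:s_a_piecewise_continuous} to control the discretization error, yielding convergence of $\avg{v_1^{(n)}(\zeta,\eta)}$ to $\avg{v_1(\zeta,\eta)}$ and, after integration in $\eta$ (again controlled uniformly via \eqref{eq:integral_bounds}), convergence of $L^{(n)}(\zeta)$ to $L(\zeta)$.

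Concretely, I would proceed in four steps. First, invoke Lemma~\ref{lem:discretizing_dyson_eq} to get, for each fixed $\zeta$, the convergence $\avg{v_1^{(n)}(\zeta,\eta)} \to \avg{v_1(\zeta,\eta)}$ — ideally with a rate or at least locally uniformly in $\eta$ on compact subsets of $(0,\infty)$, since the vector Dyson equation is a stable fixed-point equation away from $\eta = 0$. Second, promote this to $L^{(n)}(\zeta) \to L(\zeta)$: write $L^{(n)}(\zeta) - L(\zeta) = \int_0^\infty \big(\avg{v_1^{(n)}} - \avg{v_1}\big)\,\dd\eta$, split at small $\eta = \delta$ and large $\eta = T$, bound the two tails uniformly in $n$ using \eqref{eq:integral_bounds} (the $\eta \to 0$ tail is where one must be a little careful, using that $\avg{v_1^{(n)}(\zeta,\eta)}$ and $\avg{v_1(\zeta,\eta)}$ are both close to $\frac{1}{1+\eta}$, hence close to each other, for small $\eta$ — this is precisely the content of the first estimate in \eqref{eq:integral_bounds} with $T = \delta$), and apply bounded convergence on the middle range $[\delta,T]$. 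Third, feed $L^{(n)}(\zeta) \to L(\zeta)$ together with the uniform local bound $|L^{(n)}(\zeta)| \lesssim 1 + |\zeta|$ into dominated convergence against $\Delta f$ to conclude convergence of $\int_\C \Delta f\, L^{(n)}$. Fourth, combine with \eqref{eq:sigma_L_identity} for both $\sigma^{(n)}$ and $\sigma$ to get $\int f\, \dd\sigma^{(n)} \to \int f\, \dd\sigma$ for all $f \in C_0^2(\C)$, and upgrade from this class to all bounded continuous $f$ using the uniform support bound $\supp\sigma^{(n)} \subset \mathbb{D}_\varphi$ (so the sequence is tight and a standard density argument applies). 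The main obstacle I anticipate is the small-$\eta$ regime in Step two: one needs the discretization estimate from Lemma~\ref{lem:discretizing_dyson_eq} to either degrade gracefully as $\eta \downarrow 0$ or to be supplemented by the a-priori closeness of both $\avg{v_1^{(n)}}$ and $\avg{v_1}$ to the explicit profile $\frac{1}{1+\eta}$ near $\eta = 0$, so that the contribution of $(0,\delta)$ to the difference of the $L$-integrals is $O(\delta)$ uniformly in $n$; everything else is routine once that interchange of limits is justified.
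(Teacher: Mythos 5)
Your proposal is correct and follows essentially the same route as the paper: both reduce weak convergence to convergence of $\int_\C \Delta f\, L^{(n)}$ via \eqref{eq:sigma_L_identity}, use Lemma~\ref{lem:discretizing_dyson_eq} for the pointwise convergence $\avg{v_1^{(n)}(\zeta,\eta)} \to \avg{v_1(\zeta,\eta)}$, and use the uniform-in-$n$ bounds on $v_1^{(n)}$ to justify passing to the limit. The only (immaterial) difference is organisational: you integrate in $\eta$ first with a $\delta$/$T$ splitting based on \eqref{eq:integral_bounds}, whereas the paper applies dominated convergence once on $\C \times (0,\infty)$ with the dominating function $\abs{\Delta f(\zeta)}(1+\eta^2)^{-1}$ obtained from \eqref{eq:avg_M_bounded} and \eqref{eq:v_1_large_eta}.
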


Similarly, we now introduce an $n$-dependent approximation of the limiting singular value measure 
$\rho_\zeta$ from Definition~\ref{def:rho_zeta}. First, we write up the matrix Dyson equation, \eqref{general MDE}, 
in the $n$-dependent setup. 
For $S^{(n)}$ and $a^{(n)}$ as in \eqref{eq:S_n_a_n_discretised}, $\zeta \in \C$, 
$w \in \C$ with $\Im w>0$ and $M^{(n)} \in \C^{2n\times 2n}$, we consider 
\begin{equation} \label{eq:mde_general_n} 
- M^{(n)}(\zeta,w)^{-1} = \begin{pmatrix} w & \zeta - a^{(n)}\\  \ol{\zeta - a^{(n)}} & w \end{pmatrix} + \Sigma^{(n)}[M^{(n)}(\zeta,w)]. 
\end{equation} 
Here, the linear map $\Sigma^{(n)} \colon \C^{2n\times 2n} \to \C^{2n\times 2n}$ is defined through 
\[ \Sigma^{(n)} \bigg[ \begin{pmatrix} R_{11} & R_{12} \\ R_{21} & R_{22} \end{pmatrix} \bigg] = \begin{pmatrix} S^{(n)} r_{22} & 0 \\ 0 & (S^{(n)})^* r_{11} \end{pmatrix}, \] 
where $R_{11}$, $R_{12}$, $R_{21}$, $R_{22} \in \C^{n\times n}$ and $r_{11} = \diag(R_{11})$, $r_{22} = \diag(R_{22})$ 
are the diagonals of $R_{11}$ and $R_{22}$ interpreted as vectors in $\C^n$. 
Then $S^{(n)} r_{22}$ and $(S^{(n)})^*r_{11}$ are identified with the diagonal matrices, whose diagonals are the vectors $S^{(n)} r_{22}$ and $(S^{(n)})^*r_{11}$, respectively. 
Under the constraint that $\Im M^{(n)} = \frac{1}{2\ii} (M^{(n)} - (M^{(n)})^*)$ is positive definite, 
\eqref{eq:mde_general_n} has a unique solution by \cite[Theorem~2.1]{HeltonRashidiFarSpeicher2007}.

Throughout the following, we denote by $M^{(n)}$ the unique solution of \eqref{eq:mde_general_n} with $S^{(n)}$ and $a^{(n)}$ as in \eqref{eq:S_n_a_n_discretised}. 
Let $\rho_\zeta^{(n)}$ be the probability measure on $\R$, whose Stieltjes transform is given 
by  $w \mapsto \frac{1}{2n}\tr M^{(n)}(\zeta, w)$, i.e.\ 
\begin{equation} \label{ST of rho n} 
\int_{\R} \frac{\rho_\zeta^{(n)} (\dd x)}{x - w} = \frac{1}{2n} \tr M^{(n)}(\zeta,w) 
\end{equation} 
for all $w \in \C$ with $\Im w>0$. Here, $\tr$ denotes the trace on $\C^{2n\times 2n}$.
Next, we relate $\rho_\zeta^{(n)}$ and $\rho_\zeta$. 

\begin{corollary} \label{cor:singular_value_density_n_to_rho_zeta} 
If $s$ and $a$ satisfy  \ref{assum:s_a_piecewise_continuous}  
 then the following holds for each fixed $\zeta \in \C$. 
\begin{enumerate}[label=(\roman*)] 
\item \label{item:convergence_rho_zeta_n} 
$\rho_\zeta^{(n)}$ converges to $\rho_\zeta$ weakly as $n$ tends to infinity. 
\item  \label{item:limsup_rho_zeta_n_subset_supp_rho_zeta} 
 For each $\delta>0$, $\supp \rho_\zeta^{(n)} \subset \supp \rho_\zeta + (-\delta,\delta)$ for all sufficiently large $n \in \N$. 
In particular, $ \limsup_{n \to \infty}\supp \rho_\zeta^{(n)} \subset \supp \rho_\zeta $.  
\end{enumerate} 
\end{corollary}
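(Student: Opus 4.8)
The plan is to prove Corollary~\ref{cor:singular_value_density_n_to_rho_zeta} by passing from the finite-$n$ Matrix Dyson Equation \eqref{eq:mde_general_n} to the limiting one \eqref{general MDE}, exploiting the fact that the coefficients $a^{(n)}$ and $S^{(n)}$ are discretizations of the piecewise $1/2$-Hölder continuous profiles $a$ and $s$. Concretely, part~\ref{item:convergence_rho_zeta_n} follows from Lemma~\ref{lem:discretizing_dyson_eq} (the same lemma invoked for Corollary~\ref{cor:convergence_sigma_n_to_sigma}), applied for a \emph{fixed} complex spectral parameter $w$ with $\Im w>0$ rather than for $w=\ii\eta$: that lemma should give $\frac{1}{2n}\tr M^{(n)}(\zeta,w)\to\avg{M(\zeta,w)}$ as $n\to\infty$, pointwise in $w$ on the upper half-plane. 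Since both sides are Stieltjes transforms of probability measures ($\rho_\zeta^{(n)}$ and $\rho_\zeta$ respectively, by \eqref{ST of rho n} and Definition~\ref{def:rho_zeta}), pointwise convergence of Stieltjes transforms on the upper half-plane is equivalent to weak convergence of the measures (together with tightness, which is automatic here because the supports are uniformly bounded by Remark~\ref{rmk:S eps} and \cite[Proposition~2.1]{AEK_Shape}). This yields \ref{item:convergence_rho_zeta_n}.

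For part~\ref{item:limsup_rho_zeta_n_subset_supp_rho_zeta}, weak convergence alone is not enough — it does not forbid a small amount of mass of $\rho_\zeta^{(n)}$ leaking slightly outside $\supp\rho_\zeta$. The idea is to use a \emph{quantitative} stability estimate for the MDE near a point $x_0\in\R\setminus\supp\rho_\zeta$. Fix $\delta>0$ and let $x_0\in\R$ with $\dist(x_0,\supp\rho_\zeta)>\delta$. On a neighbourhood of $x_0$ the solution $M(\zeta,w)$ of \eqref{general MDE} extends continuously (in fact analytically) up to the real axis, with $\Im M(\zeta,x_0+\ii 0)=0$ and a uniformly bounded $\norm{M(\zeta,x_0)}$; this is the standard fact that outside the support the self-consistent density vanishes and $M$ is real analytic (cf.\ \cite{AEK_Shape}). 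One then shows, via the implicit-function / contraction argument for \eqref{eq:mde_general_n} with the perturbation being the difference between $(a^{(n)},S^{(n)})$ and $(a,s)$ — whose relevant operator-norm discrepancy is $o(1)$ by the Hölder regularity, exactly as encoded in Lemma~\ref{lem:discretizing_dyson_eq} — that for $n$ large $M^{(n)}(\zeta,w)$ also admits a bounded analytic continuation to a fixed neighbourhood of $x_0$, hence $\Im\frac{1}{2n}\tr M^{(n)}(\zeta,x_0+\ii\eta)\to 0$ as $\eta\downarrow0$ uniformly, i.e.\ $x_0\notin\supp\rho_\zeta^{(n)}$. Covering the compact set $\{x\in\R:\abs{x}\le C,\ \dist(x,\supp\rho_\zeta)\ge\delta\}$ by finitely many such neighbourhoods, and using the uniform support bound to handle large $\abs{x}$, gives $\supp\rho_\zeta^{(n)}\subset\supp\rho_\zeta+(-\delta,\delta)$ for all large $n$. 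The $\limsup$ inclusion is then immediate from the definition of $\limsup$ of sets and the arbitrariness of $\delta$.

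An alternative, and probably cleaner, route to \ref{item:limsup_rho_zeta_n_subset_supp_rho_zeta} avoids re-deriving analyticity from scratch: combine the quantitative bound on the Stieltjes transforms from Lemma~\ref{lem:discretizing_dyson_eq} with a known statement in \cite{AEKN_Kronecker} or \cite{AEK_Shape} controlling $\supp\rho_\zeta^{(n)}$ in terms of $\supp\rho_\zeta$ once the defining data are close. In particular, if Lemma~\ref{lem:discretizing_dyson_eq} supplies a rate $\abs{\frac1{2n}\tr M^{(n)}(\zeta,w)-\avg{M(\zeta,w)}}\le C(\zeta,\Im w)\,\epsilon_n$ with $\epsilon_n\to0$, one can choose $\Im w=\delta/2$ and convert the bound on $\Im$ of the Stieltjes transform at height $\delta/2$ into an upper bound on $\rho_\zeta^{(n)}$-mass in a $\delta/2$-neighbourhood of any point at distance $>\delta$ from $\supp\rho_\zeta$; since $\rho_\zeta^{(n)}$ has no mass there in the limit and the support is a closed set detected by vanishing of the harmonic extension, one concludes as above.

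The main obstacle I anticipate is part~\ref{item:limsup_rho_zeta_n_subset_supp_rho_zeta}: upgrading weak convergence to the one-sided support containment requires genuine \emph{uniformity} in $w$ near the real axis outside $\supp\rho_\zeta$, which in turn rests on (i) the continuous/analytic extension of $M(\zeta,\cdot)$ past the real axis off the support, with quantitative norm control, and (ii) a perturbative stability of \eqref{eq:mde_general_n} under the $o(1)$ discretization error that survives all the way down to the real axis in the complement of the support. Both are standard in the MDE literature (e.g.\ \cite{AEK_Shape,AEKN_Kronecker}), but assembling them with constants uniform over the relevant compact set of $x_0$ is the technical heart of the argument. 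The convergence statement \ref{item:convergence_rho_zeta_n} itself is routine once Lemma~\ref{lem:discretizing_dyson_eq} is available.
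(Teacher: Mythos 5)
Your part \ref{item:convergence_rho_zeta_n} is exactly the paper's argument: Lemma~\ref{lem:discretizing_dyson_eq} at fixed $w\in\C_+$ gives pointwise convergence of the Stieltjes transforms $\frac{1}{2n}\tr M^{(n)}(\zeta,w)=\avg{\wh{M}^{(n)}(\zeta,w)}\to\avg{M(\zeta,w)}$, which yields weak convergence. Your overall strategy for part \ref{item:limsup_rho_zeta_n_subset_supp_rho_zeta} (stability of the MDE away from $\supp\rho_\zeta$, continuous extension of $\wh{M}^{(n)}$ down to the real axis, covering a compact set) is also the paper's, but it contains a genuine gap at the decisive step.

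The gap is the inference ``$M^{(n)}(\zeta,\cdot)$ admits a bounded (analytic) continuation to a neighbourhood of $x_0$, \emph{hence} $\Im\frac{1}{2n}\tr M^{(n)}(\zeta,x_0+\ii\eta)\to0$ as $\eta\downarrow0$, i.e.\ $x_0\notin\supp\rho_\zeta^{(n)}$.'' A bounded continuous extension of a Stieltjes transform to the real axis only shows that $\rho_\zeta^{(n)}$ has a bounded density near $x_0$; membership of $x_0$ in $\supp\rho_\zeta^{(n)}$ is decided by whether the boundary value of the imaginary part is \emph{exactly} zero, not small. Everything quantitative you have available — the stability estimate $\norm{\wh{M}^{(n)}-M}_2\lesssim\Psi_n$ from Lemma~\ref{lem:discretizing_dyson_eq} together with $\Im M(\zeta,x_0+\ii0)=0$ — only yields $\limsup_{\eta\downarrow0}\norm{\Im\wh{M}^{(n)}(\zeta,x_0+\ii\eta)}_2=O(\Psi_n)$, i.e.\ a \emph{small but possibly nonzero} density of $\rho_\zeta^{(n)}$ at $x_0$ for each finite $n$. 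That does not exclude $x_0$ from $\supp\rho_\zeta^{(n)}$. Your alternative route has the same defect: bounding $\Im$ of the Stieltjes transform at height $\delta/2$ only bounds the mass of $\rho_\zeta^{(n)}$ near $x_0$ by $O(\delta\,\epsilon_n)$, which is not zero for fixed $n$.

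The paper closes this gap with a symmetrization and uniqueness argument. At $\tau$ with $\dist(\tau,\supp\rho_\zeta)\geq\delta$ the limiting boundary value $M=M(\zeta,\tau)$ is self-adjoint. One rewrites the perturbed equation for $\Delta=\wh{M}^{(n)}-M$ in the symmetrized form $L[\Delta]=\frac12\big(K_n(\Delta,\wt{\Sigma},\wt{A})+K_n(\Delta^*,\wt{\Sigma},\wt{A})^*\big)$ with self-adjoint perturbation data. Since $\norm{L^{-1}}_2\lesssim_\delta1$, the implicit function theorem gives a \emph{unique} small solution of this relation, and because $L[R]^*=L[R^*]$ (as $M=M^*$) and the right-hand side is symmetric under $\Delta\mapsto\Delta^*$, that unique solution is self-adjoint. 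Identifying it with $\wh{M}^{(n)}(\zeta,\tau)-M(\zeta,\tau)$ (possible once $\Psi_n$ is small) forces $\wh{M}^{(n)}(\zeta,\tau)=\wh{M}^{(n)}(\zeta,\tau)^*$ exactly, hence $\Im\avg{\wh{M}^{(n)}(\zeta,\tau)}=0$ and $\tau\notin\supp\rho_\zeta^{(n)}$. Without some argument of this kind (exact self-adjointness of the boundary value, not mere smallness of its imaginary part), your proof of part \ref{item:limsup_rho_zeta_n_subset_supp_rho_zeta} does not go through.
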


Corollary~\ref{cor:singular_value_density_n_to_rho_zeta} will also be derived from Lemma~\ref{lem:discretizing_dyson_eq} below. 
The \hyperlink{proof:cor:singular_value_density_n_to_rho_zeta}{proof of Corollary~\ref{cor:singular_value_density_n_to_rho_zeta}} will be given in Section~\ref{sec:discretizing_Dyson_equation} below.

\section{Proof of main results -- Theorem~\ref{thm:global_law} and Theorem~\ref{thr:Spectrum occupies pseudospectrum}} 
\label{sec:proofs_main_results}

This section is devoted to the proofs of our main results, Theorem~\ref{thm:global_law} and Theorem~\ref{thr:Spectrum occupies pseudospectrum}. They are derived from the results in the previous sections as well as some 
inputs from \cite{AEKN_Kronecker,smallestsingularvalue,smallestsingularvalue_complex}. 
The underlying idea for both derivations is the Hermitization approach going back to Girko \cite{Girko1984} 
which allows to understand the eigenvalue density of $X + A$ by understanding the spectra 
of the Hermitian matrices $(H_\zeta)_{\zeta \in \C}$ defined through 
\begin{equation} \label{eq:def_H_zeta} 
H_\zeta := \begin{pmatrix} 0 & X+A - \zeta \\ (X+A - \zeta)^* & 0 \end{pmatrix}. 
\end{equation} 
The usefulness of $H_\zeta$ becomes apparent from the following properties. A complex number $\zeta\in \C$ is an eigenvalue 
of $X + A$ if and only if $H_\zeta$ has a nontrivial kernel. 
Furthermore, the spectrum of $H_\zeta$ is symmetric around zero and its non-negative eigenvalues coincide with the singular values of $X+A - \zeta$ (with multiplicities).

\subsection{Proof of Theorem~\ref{thm:global_law}} \label{subsec:proof_global_law} 

After this general explanation, we now focus on the proof of Theorem~\ref{thm:global_law}. 
To that end, we now explain in detail how the empirical spectral distribution of $X + A$ is 
expressed in terms of the family $(H_\zeta)_{\zeta \in \C}$.

First, as $\log \abs{\, \cdot\, }$ is the fundamental solution for the Laplace operator on $\C$, we obtain 
\begin{equation} \label{eq:spectral_statistics_log_determinant} 
\frac{1}{n} \sum_{\xi \in \spec(X+ A)} f(\xi) = \frac{1}{2\pi n} \sum_{\xi \in \spec(X + A)} 
\int_{\C} \Delta f(\zeta) \log \abs{\xi - \zeta} \dd^2 \zeta 
= \frac{1}{4\pi n} \int_{\C} \Delta f(\zeta) \log \abs{\det H_\zeta} \dd^2 \zeta, 
\end{equation} 
where the last step follows from 
\begin{equation} \label{eq:log_determinant} 
\sum_{\xi \in \spec(X + A)} \log \abs{\xi- \zeta} = \log \abs{\det (X + A -\zeta)} = \frac{1}{2} \log \abs{\det H_\zeta}. 
\end{equation} 
We can now express the log-determinant of $H_\zeta$ as an integral of the normalised trace of the 
resolvent $G(\zeta, \ii \eta) := (H_\zeta -\ii \eta)^{-1}$ of $H_\zeta$ on the imaginary axis; this expression reads as 
\begin{equation} \label{eq:log_determinant_integral} 
\log \abs{\det H_\zeta} = - 2 n \int_0^T \Im \avg{G(\zeta, \ii \eta)} \dd \eta + \log \abs{\det (H_\zeta - \ii T)} 
\end{equation} 
for any $T >0$ (see \cite{TaoVu2015} for an application of \eqref{eq:log_determinant_integral} in a similar context).  
Here and in the following, for a $K\times K$-matrix $R \in \C^{K\times K}$, we denote by 
 $\avg{R} = \frac{1}{K} \Tr R$ the normalized trace of $R$.

For the proof of Theorem~\ref{thm:global_law}, we follow the strategy of \cite[proof of Theorem~2.3]{AK_Corr_circ}, which is presented in \cite[Section~3.2]{AK_Corr_circ}. 

The next proposition, which follows directly from results in \cite{AEKN_Kronecker}, shows that $\avg{G(\zeta, \ii \eta)}$ 
is approximately deterministic. Given \eqref{eq:spectral_statistics_log_determinant} and 
\eqref{eq:log_determinant_integral}, 
this explains the origin of the definition of $\sigma$  via \eqref{eq:sigma_L_identity} and \eqref{eq:def_L}.  

In the next proposition and throughout this section, we use the following notion of high probability events. 
We say that a sequence of events $(\Omega_n)_{n \in \N}$ occurs \emph{with very high probability} if for each 
$\nu \in \N$, there is a constant $C_\nu >0$ (i.e.\ $C_\nu$ does not depend on $n$) such that 
$\P(\Omega_n) \ge 1- C_\nu n^{-\nu}$ for all $n \in \N$.

\begin{proposition}[Deterministic approximation of resolvent of $H_\zeta$, averaged version] \label{pro:global_law_H} 
Let $X \in \C^{n\times n}$ satisfy  \ref{assum:independent_centered}.  Let   $A = D(a^{(n)}) := (a_i^{(n)} \delta_{ij})_{i,j \in \db{n}}$ for some $a^{(n)}=(a_i^{(n)})_{i \in \db{n}} \in \C^n$ with  
 $\norm{a^{(n)}}_\infty=\max_{i\in\db{n}}\abs{a_i^{(n)}} \lesssim 1$.  
 Let $(v_1^{(n)},v_2^{(n)})$ be the solution of  \eqref{eq:V_equations_n} with $S^{(n)} = (\E\abs{x_{ij}}^2)_{i,j\in \db{n}}$.   
Let $\varphi >0$ be fixed. 
Then there are universal constants $\delta >0$ and $P \in \N$ such that 
\[ 
\abs{\avg{G(\zeta, \ii \eta)} - \ii \avg{v_1^{(n)}(\zeta, \eta)}} \leq \frac{n^{P\delta}}{(1 + \eta^2) n} 
\] 
with very high probability uniformly for all $n\in \N$, $\eta \in [n^{-\delta},\infty)$ and $\zeta \in  \mathbb{D}_\varphi$. 
\end{proposition}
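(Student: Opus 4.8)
The plan is to deduce Proposition~\ref{pro:global_law_H} from the general local law for Kronecker random matrices established in \cite{AEKN_Kronecker}, applied to the Hermitized matrix $H_\zeta$. First I would identify the linearized matrix: writing $H_\zeta = H_0 + \mathcal{A}_\zeta$ with $H_0 = \bigl(\begin{smallmatrix} 0 & X \\ X^* & 0 \end{smallmatrix}\bigr)$ the centered part and $\mathcal{A}_\zeta = \bigl(\begin{smallmatrix} 0 & A-\zeta \\ \overline{A-\zeta} & 0\end{smallmatrix}\bigr)$ the deterministic part, one checks that $H_\zeta$ falls into the Kronecker random matrix class of \cite{AEKN_Kronecker}: the self-energy operator induced by the second moments of $H_0$ is exactly $\Sigma^{(n)}$ from \eqref{eq:mde_general_n} (the off-diagonal block structure of $X$ producing the $S^{(n)} r_{22}$ and $(S^{(n)})^* r_{11}$ pattern), so the associated Matrix Dyson Equation is precisely \eqref{eq:mde_general_n} and its solution $M^{(n)}(\zeta, w)$ is the deterministic approximation of $G(\zeta, w) = (H_\zeta - w)^{-1}$. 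The flatness/boundedness hypotheses needed in \cite{AEKN_Kronecker} reduce to \ref{assum:independent_centered}, the bound $\norm{a^{(n)}}_\infty \lesssim 1$ and the fact that $S^{(n)}$ has uniformly bounded row and column sums (from \ref{assum:primitive_upper_lower}); the moment assumption \eqref{bounded moments} supplies the required moment conditions.

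Second, I would invoke the averaged local law from \cite{AEKN_Kronecker} (the relevant statement is essentially \cite[Theorem~2.7]{AEKN_Kronecker}, quoted above as the input for Corollary~\ref{cor:singular_value_density_n_to_rho_zeta}), which gives, with very high probability,
\[
\bigl| \avg{G(\zeta, \ii\eta)} - \tfrac{1}{2n}\tr M^{(n)}(\zeta, \ii\eta) \bigr| \lesssim \frac{n^{\epsilon}}{n \, \eta}
\]
for $\eta$ above some polynomially small threshold $n^{-\delta}$ and $\zeta$ in a fixed disk $\mathbb{D}_\varphi$. The diagonal entries of $M^{(n)}(\zeta, \ii\eta)$ are $\ii v_1^{(n)}$ and $\ii v_2^{(n)}$ by the identification in Section~\ref{sec:relation_dyson_mde} (applied in the $n$-dimensional setting), and the averaged trace equals $\ii\avg{v_1^{(n)}}$ because the symmetry of the problem forces $\avg{v_1^{(n)}} = \avg{v_2^{(n)}}$ after tracing — or, more directly, $\frac{1}{2n}\tr M^{(n)}(\zeta,\ii\eta)$ is the Stieltjes transform of the symmetric measure $\rho_\zeta^{(n)}$ and matching the trace of the upper-left block gives $\avg{v_1^{(n)}}$. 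This produces the stated bound with the crude factor $n\eta$ in the denominator.

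Third, I would upgrade $n\eta$ to the claimed $(1+\eta^2)n$. For $\eta \geq 1$ this is the better bound, and it follows from the norm bound $\norm{G(\zeta,\ii\eta)} \leq \eta^{-1}$ together with the improved decay of the self-consistent approximation — one can either re-run the stability analysis of the MDE with the spectral parameter $\ii\eta$ large (the MDE stability operator becomes trivially invertible, giving errors of order $n^{\epsilon}/(n\eta^2)$), or use a resolvent expansion $G = -(\ii\eta)^{-1} + (\ii\eta)^{-1}(H_\zeta) G$ to trade one power of $\eta$. For $\eta \in [n^{-\delta}, 1]$ one has $(1+\eta^2)^{-1} \sim 1$ so it suffices to absorb the $\eta^{-1} \leq n^{\delta}$ loss into the exponent, renaming constants so that the numerator reads $n^{P\delta}$ for a suitable universal $P$. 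The main obstacle is the careful bookkeeping that the constants $\delta$ and $P$ can be chosen \emph{universal} (depending only on the model parameters of \ref{assum:independent_centered} and \ref{assum:primitive_upper_lower}, not on $\zeta$), which requires checking that the local law input from \cite{AEKN_Kronecker} is uniform over $\zeta \in \mathbb{D}_\varphi$ — this is where one uses that $a^{(n)}$ enters only through $\norm{a^{(n)}}_\infty$ and that $\zeta$ ranges over a compact set, so the flatness constants of the Hermitization $H_\zeta$ are uniformly controlled; a standard net argument over $\zeta$ combined with the Lipschitz continuity of both $\avg{G(\zeta,\ii\eta)}$ and $\avg{v_1^{(n)}(\zeta,\eta)}$ in $\zeta$ then removes any $\zeta$-dependence in the exceptional set.
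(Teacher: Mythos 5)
Your proposal is correct and follows essentially the same route as the paper: both identify $H_\zeta$ as a Kronecker random matrix in the sense of \cite[Definition~2.1]{AEKN_Kronecker}, match its Matrix Dyson Equation with \eqref{eq:mde_general_n}, and import the averaged local/global law from that reference (the paper cites \cite[Lemma~B.1(ii), eq.~(B.5) and eq.~(4.46)]{AEKN_Kronecker}, which already carry the $(1+\eta^2)^{-1}$ improvement you reconstruct by hand for $\eta\geq 1$). Your identification $\tfrac{1}{2n}\Tr M^{(n)} = \ii\avg{v_1^{(n)}}$ is also right, resting on $\avg{v_1^{(n)}}=\avg{v_2^{(n)}}$, which follows from the identity $v_2^{(n)}(\eta+(S^{(n)})^*v_1^{(n)})=v_1^{(n)}(\eta+S^{(n)}v_2^{(n)})$ noted in Section~\ref{sec:relation_dyson_mde}.
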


\begin{proof}[\linkdest{proof:pro:global_law_H}{Proof}] 
The matrix $X + A$ is a Kronecker matrix according to \cite[Definition~2.1]{AEKN_Kronecker} 
with the choices $L =1$, $\ell =1$, $\wt{\alpha}_1 = 1$, $X_1 = X$, $\beta_1 = 0$, $Y_1 = 0$ and $\wt{a}_i =  a_i^{(n)}$ for all $i \in \db{n}$. 
In particular, the Hermitization $H_\zeta$ defined in \eqref{eq:def_H_zeta} is also a Kronecker matrix. 
Moreover, $H_\zeta$ satisfies the assumptions of \cite[Lemma~B.1 (ii)]{AEKN_Kronecker} due to \ref{assum:independent_centered} 
 and  $\norm{a^{(n)}}_\infty \lesssim 1$.  
Since the Hermitized matrix Dyson equation from \cite[eq.s~(2.2) -- (2.6)]{AEKN_Kronecker} 
coincides with the matrix Dyson equation,  \eqref{eq:mde_general_n}, associated with \eqref{eq:V_equations_n} for $(v_1^{(n)}, 
v_2^{(n)})$ and $S^{(n)} = (\E\abs{x_{ij}}^2)_{i,j\in \db{n}}$,  
 \cite[eq.~(B.5)]{AEKN_Kronecker} and \cite[eq.~(4.46)]{AEKN_Kronecker} imply Proposition~\ref{pro:global_law_H}. 
\end{proof}

The next lemma controls the number of small 
singular values of $X + A - \zeta$ and follows  from Proposition~\ref{pro:global_law_H} and an upper bound on $\abs{\avg{M(\zeta,\ii\eta)}}$.

\begin{lemma}[Number of small singular values of $X+ A - \zeta$] \label{lem:small_singular_values} 
 Let $a \in \mathcal B$ and $s$ satisfy \ref{assum:primitive_upper_lower}. 
Let $X$ satisfy \ref{assum:independent_centered} and let $A \in \C^{n\times n}$ be deterministic. 
Suppose that the entries of $X$ and $A$ satisfy \eqref{eq:a_ij_and_s_ij_relation_a_and_s}. 
Let $\varphi >0$ be fixed.  
Then there is a universal constant $\delta >0$ such that 
\[ \abs{ \spec(H_\zeta) \cap [-\eta,\eta]  } \lesssim n \eta \] 
with very high probability uniformly for all $\eta \in [n^{-\delta}, \infty)$ and $\zeta \in  \mathbb{D}_\varphi$. 
\end{lemma}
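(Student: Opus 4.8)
The plan is to deduce the bound on the number of small singular values of $X + A - \zeta$ from a Stieltjes-transform argument combined with the deterministic approximation of the resolvent in Proposition~\ref{pro:global_law_H}. First I would recall that $\abs{\spec(H_\zeta) \cap [-\eta,\eta]}$ can be controlled by the imaginary part of the normalized trace of the resolvent evaluated at a point on the imaginary axis at height comparable to $\eta$: indeed, if $\lambda_1,\dots,\lambda_{2n}$ are the eigenvalues of $H_\zeta$, then
\[
\Im \avg{G(\zeta,\ii\eta)} = \frac{1}{2n} \sum_{j=1}^{2n} \frac{\eta}{\lambda_j^2 + \eta^2} \geq \frac{1}{2n}\cdot \frac{1}{2\eta} \cdot \abs{\{ j \colon \abs{\lambda_j} \leq \eta\}},
\]
so that $\abs{\spec(H_\zeta) \cap [-\eta,\eta]} \leq 4 n \eta\, \Im \avg{G(\zeta,\ii\eta)}$. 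Hence it suffices to show $\Im \avg{G(\zeta,\ii\eta)} \lesssim 1$ with very high probability, uniformly in the stated range of $\eta$ and $\zeta$.

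The second step is to invoke Proposition~\ref{pro:global_law_H}, which gives, with very high probability, $\abs{\avg{G(\zeta,\ii\eta)} - \ii \avg{v_1^{(n)}(\zeta,\eta)}} \leq n^{P\delta}/((1+\eta^2)n)$ for $\eta \geq n^{-\delta}$. Choosing $\delta$ small enough (smaller than $1/(2P)$ say, and shrinking the $\delta$ from Proposition~\ref{pro:global_law_H} if necessary), the error term is bounded by $n^{P\delta - 1} \leq 1$ uniformly in $\eta \geq n^{-\delta}$ and $\zeta \in \mathbb{D}_\varphi$. Therefore $\Im \avg{G(\zeta,\ii\eta)} \leq \avg{v_1^{(n)}(\zeta,\eta)} + 1$ with very high probability. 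It then remains only to bound $\avg{v_1^{(n)}(\zeta,\eta)}$ uniformly, which is exactly the content of Lemma~\ref{lem:v_bounds}: by \eqref{eq:avg_M_bounded} (and its validity for the discretized solution $v_1^{(n)}$, which holds under \ref{assum:primitive_upper_lower} and \eqref{eq:a_ij_and_s_ij_relation_a_and_s}), we have $\avg{v_1^{(n)}(\zeta,\eta)} \lesssim (1+\eta)^{-1} \leq 1$. Combining the three displays yields $\abs{\spec(H_\zeta) \cap [-\eta,\eta]} \lesssim n\eta(1 + 1) \lesssim n\eta$ with very high probability, as claimed.

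One technical point to handle carefully is the uniformity of the very-high-probability statement over $\eta$ and $\zeta$: Proposition~\ref{pro:global_law_H} already states its bound uniformly in $\eta \in [n^{-\delta},\infty)$ and $\zeta \in \mathbb{D}_\varphi$, so no additional net/union-bound argument is needed here — one simply intersects the single very-high-probability event from that proposition with the (deterministic) bound on $v_1^{(n)}$. The only genuine care is in the bookkeeping of the constant $\delta$: we need the $\delta$ in the conclusion of the present lemma to be a single constant for which both the resolvent approximation is valid and the resulting error is $O(1)$, so I would set $\delta := \min\{\delta_{\mathrm{Prop}}, 1/(2P)\}$ where $\delta_{\mathrm{Prop}}$ and $P$ come from Proposition~\ref{pro:global_law_H}, and note that the conclusion for a smaller $\delta$ follows trivially from the conclusion for a larger one since the range of $\eta$ only shrinks. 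I expect no serious obstacle; the argument is a standard Stieltjes-transform counting bound, and the only mild subtlety is that the error bound in Proposition~\ref{pro:global_law_H} degrades like $n^{P\delta}$, which is why $\delta$ must be chosen small relative to $1/P$ rather than being arbitrary.
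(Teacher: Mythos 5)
Your proposal is correct and is essentially identical to the paper's proof: both bound $\abs{\spec(H_\zeta)\cap[-\eta,\eta]}/(2\eta)$ by $\Im \Tr G(\zeta,\ii\eta)$ via the elementary inequality $\eta/(\lambda^2+\eta^2)\geq 1/(2\eta)$ for $\abs{\lambda}\leq\eta$, and then control the trace of the resolvent by combining Proposition~\ref{pro:global_law_H} with the bound \eqref{eq:avg_M_bounded} on $\avg{v_1^{(n)}}$. Your explicit bookkeeping of $\delta$ relative to $1/P$ is a point the paper leaves implicit, but it is the right observation and does not change the argument.
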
 

\begin{proof} 
Proposition~\ref{pro:global_law_H} and  \eqref{eq:avg_M_bounded} for $\avg{v_1^{(n)}(\zeta,\eta)}$  
imply that the trace of $G(\zeta,\ii \eta)$ is bounded by a multiple of $n$ with very high probability. More precisely, $\abs{\Tr G(\zeta, \ii \eta)} \lesssim 
n$ with very high probability uniformly for all $\eta \in [n^{-\delta}, \infty)$ and $\zeta \in \mathbb{D}_\varphi$.
Hence, we set $\Sigma_\eta := \spec(H_\zeta)\cap [-\eta, \eta]$ and estimate 
\[ \frac{\abs{\Sigma_\eta}}{2 \eta} \leq \sum_{\lambda \in \Sigma_\eta} \frac{ \eta}{\lambda^2 + \eta^2} 
\leq \Im \Tr G(\zeta, \ii\eta) \lesssim n. \qedhere \]
\end{proof} 

We apply the previous results, i.e.\ Proposition~\ref{pro:global_law_H} and Lemma~\ref{lem:small_singular_values}, 
as well as \ref{assum:smallest_singular_value}    to 
the right-hand side of \eqref{eq:linear_statistics_reformulation_proof_global_law} by discretizing the integral in $\zeta$ through the next lemma. 

\begin{lemma}[Monte Carlo Sampling] \label{lem:monte_carlo} 
Let $\Omega \subset \C$ be bounded and of positive Lebesgue measure. Let $\mu$ be the normalised Lebesgue measure on $\Omega$ and $F \colon \Omega \to \C$ square-integrable with respect to $\mu$. 
Let $N\in \N$ and $\xi_1$, \ldots, $\xi_N$ be independent random variables distributed according to $\mu$. 
Then, for any $\eps >0$, we have 
\[ \P \bigg( \absbb{\frac{1}{N} \sum_{i=1}^N F(\xi_i) - \int_\Omega F \dd \mu} 
\leq \frac{1}{\sqrt{N \eps}} \Big( \int_\Omega \absB{F - \int_\Omega F \dd \mu}^2 \Big)^{1/2} 
\bigg) \geq 1 - \eps. \] 
\end{lemma}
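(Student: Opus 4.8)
The plan is to prove this by a direct second moment computation followed by Chebyshev's inequality; there is no substantial obstacle, the statement being the standard Monte Carlo error bound. Write $\overline{F} := \int_\Omega F \dd \mu$, which is well-defined and finite since $F \in L^2(\mu) \subset L^1(\mu)$ as $\mu$ is a probability measure. Set $Y_i := F(\xi_i) - \overline{F}$ for $i \in \db{N}$. Since the $\xi_i$ are i.i.d.\ with law $\mu$, the $Y_i$ are i.i.d.\ with $\E Y_i = 0$ and $\E \abs{Y_i}^2 = \int_\Omega \abs{F - \overline{F}}^2 \dd \mu =: v < \infty$.

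Next I would compute the second moment of the centered empirical average $Z := \frac 1N \sum_{i=1}^N Y_i$. Expanding,
\[
\E \abs{Z}^2 = \frac{1}{N^2} \sum_{i,j=1}^N \E\big[ Y_i \overline{Y_j} \big] = \frac{1}{N^2} \sum_{i=1}^N \E\abs{Y_i}^2 = \frac{v}{N},
\]
where the off-diagonal terms vanish because for $i \ne j$ the variables $Y_i$ and $Y_j$ are independent and centered, so $\E[Y_i\overline{Y_j}] = \E[Y_i]\,\overline{\E[Y_j]} = 0$ (this is the only place where care with the complex-valued integrand is needed, and independence handles it cleanly).

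Finally, I would apply Markov's inequality to the nonnegative random variable $\abs{Z}^2$: for any $t > 0$,
\[
\P\big( \abs{Z} > t \big) = \P\big( \abs{Z}^2 > t^2 \big) \leq \frac{\E \abs{Z}^2}{t^2} = \frac{v}{N t^2}.
\]
Choosing $t := (v/(N\eps))^{1/2} = \frac{1}{\sqrt{N\eps}} \big( \int_\Omega \abs{F - \overline{F}}^2 \dd \mu \big)^{1/2}$ makes the right-hand side equal to $\eps$, so $\P(\abs{Z} \le t) \ge 1 - \eps$, which is precisely the claimed inequality since $Z = \frac1N \sum_{i=1}^N F(\xi_i) - \int_\Omega F \dd \mu$. (If $v = 0$ the statement is trivial, with $F$ constant $\mu$-a.e.) This completes the proof.
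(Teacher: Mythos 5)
Your proof is correct and takes essentially the same route as the paper: the paper simply notes that the $F(\xi_i)$ are i.i.d.\ with mean $\int_\Omega F\,\dd\mu$ and variance $\int_\Omega |F-\int_\Omega F\,\dd\mu|^2\,\dd\mu$ and invokes Chebyshev's inequality. You have merely written out the second-moment computation (including the vanishing of the complex off-diagonal terms) explicitly.
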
 

Lemma~\ref{lem:monte_carlo} is a special case of \cite[Lemma~36]{TaoVu2015}. For the convenience of the reader, we present the very short proof here.

\begin{proof} 
Each of the i.i.d.\ random variables $F(\xi_1)$, \ldots, $F(\xi_m)$ has expectation $\int_\Omega F\dd \mu$ 
and variance $\int_\Omega \abs{F - \int_\Omega F\dd \mu}^2 \dd\mu$. Hence, 
Chebysheff's inequality yields Lemma~\ref{lem:monte_carlo}.  
\end{proof}

The final ingredient for the proof of Theorem~\ref{thm:global_law} is the following remark which asserts that 
all eigenvalues of $X + A$ are contained in $\mathbb S_\eps$ defined in \eqref{eq:def_S_eps} with very high probability. 

\begin{remark}[No outlier eigenvalues of $X + A$] \label{rem:exclusion_eigenvalues} 
If $X$ satisfies  
\ref{assum:independent_centered} and $A=D(a)$ for some $a \in \C^n$ with $\norm{a}_\infty \lesssim 1$ then, 
for every $\eps>0$ and $\delta \in (0,\eps)$, 
all eigenvalues of $X + A$ are contained in $\mathbb{S}_{\eps}$ with very high probability, i.e.\ for each $\nu>0$, 
there is a constant $C \equiv C_{\eps,\delta, \nu}>0$ such that 
\[ \P \big( \spec(X + A) \subset \spec_{\eps-\delta}(X + A)  \subset \mathbb S_{\eps}\big) \geq 1 - C n^{-\nu} \] 
for all $n \in \N$. 
This follows directly from \cite[Lemma~6.1]{AEKN_Kronecker}  and Corollary~\ref{cor:singular_value_density_n_to_rho_zeta} \ref{item:limsup_rho_zeta_n_subset_supp_rho_zeta}.  Here, we used that $X + A$ is a Kronecker matrix 
according to \cite[Definition~2.1]{AEKN_Kronecker} 
and that the Dyson equation \eqref{eq:mde} and \cite[eq.~(2.6)]{AEKN_Kronecker} coincide  as explained in the \hyperlink{proof:pro:global_law_H}{proof of Proposition~\ref{pro:global_law_H}}.  
\end{remark}

We have now collected all ingredients for the proof of Theorem~\ref{thm:global_law}, which we present next. 

\begin{proof}[\linkdest{proof:thm:global_law}{Proof of Theorem~\ref{thm:global_law}}]  
 
Let $a^{(n)}$ and $S^{(n)}$ be defined as in \eqref{eq:S_n_a_n_discretised}. Given these choices, let $(v_1^{(n)},
v_2^{(n)})$ be the solution of \eqref{eq:V_equations_n} and $\sigma^{(n)}$ as in Proposition~\ref{pro:existence_sigma_and_sigma_n} \ref{item:L_n_sigma_n}.  
Below, we will show that 
\begin{equation} \label{eq:global_law} 
\lim_{n \to \infty} \P \bigg( \absbb{\frac{1}{n} \sum_{\xi \in \spec(X + A)} f(\xi) - \int_{\C} f \dd \sigma^{(n)}} >\eps \bigg) = 0.  
\end{equation}  
 for each $\eps>0$, $m \in \N$ and $f \in C_b(\C)$ with $\supp f \cap \Omega_m = \varnothing$.   
Here, $\Omega_m$ with $m \in \N$ is chosen as follows. Let $C>0$ be a constant such that $\abs{a(x) - a(y)} 
\leq C \abs{x-y}^\theta$ for all $x, y \in I_k$ and $k \in \db{K}$. The existence of such $C$ follows from 
\ref{assum:a_better_than_hoelder_1_2}. Then there are $x_1^{(m)},\ldots, x_{mK}^{(m)}$ such that 
$a([0,1]) \subset \bigcup_{i=0}^{mK} \DD_{Cm^{-\theta}}(x_i^{(m)})$ by \ref{assum:a_better_than_hoelder_1_2}. 
We set $\Omega_m := \bigcup_{i=0}^{mK} \DD_{2Cm^{-\theta}}(x_i^{(m)})$. Note that the Lebesgue measure $\abs{\Omega_m}$ of $\Omega_m$ tends to zero when $m \to \infty$ as $\theta >1/2$. 

We now justify that it suffices to prove \eqref{eq:global_law}. 
Then by Corollary~\ref{cor:convergence_sigma_n_to_sigma}, the convergence in 
\eqref{eq:global_law} holds when $\sigma^{(n)}$ is replaced by $\sigma$ from Proposition~\ref{pro:existence_sigma_and_sigma_n} \ref{item:L_sigma}. 
Fix $\eps>0$ and an arbitrary $f \in C_b(\C)$. For each $m \in \N$, we choose $\psi_m \in C_b(\C)$ such 
that $\ran \psi_m \subset [0,1]$, $\psi_m \equiv 0$ on $\Omega_{2m}$ and $\psi_m \equiv 1$ 
on $\C \setminus \Omega_{m}$. 
In particular, $f \psi_m \in C_b(\C)$ and $\supp(f\psi_m) \cap \Omega_{2m} = \varnothing$. 
Hence, $\frac{1}{n} \sum_{\xi} f(\xi) \psi_m(\xi)$ converges to $\int f\psi_m \dd\sigma$ in probability as $n \to \infty$, where the sum is taken over $\xi \in \spec(X + A)$.  
Furthermore, 
\begin{align*}  
\absbb{\frac{1}{n} \sum_{\xi \in \spec(X + A)} f(\xi) (1- \psi_m(\xi))} 
\leq &  \norm{f}_\infty \absbb{\int_{\C} 1 - \psi_m \dd \sigma - \frac{1}{n} \sum_{\xi \in \spec(X + A)} \psi_m(\xi) + 
\int_{\C} \psi_m \dd \sigma } \\ 
\leq & \norm{f}_\infty \bigg( \norm{\sigma}_\infty \abs{\Omega_{m}} + \absbb{\frac{1}{n} \sum_{\xi \in \spec(X + A)} \psi_m(\xi) - 
\int_{\C} \psi_m \dd \sigma } \bigg) 
\end{align*}  
where,  by a slight abuse of notation, we denoted by $\sigma$ the density of the measure $\sigma$ (cf.\ Proposition~\ref{pro:limiting_measure_and_S}) in the second step. 
Moreover, we used $\supp (1- \psi_m) \subset \Omega_{m}$ and the boundedness of the density $\sigma$.  
As argued above, the Lebesgue measure $\abs{\Omega_{m}}$ tends to zero as $m \to \infty$. 
Therefore, $\frac{1}{n} \sum_{\xi } f(\xi) (1 - \psi_m(\xi))$ converges to zero 
in probability when $n \to \infty$ due to Corollary~\ref{cor:convergence_sigma_n_to_sigma}, \eqref{eq:global_law} and $\supp \psi_m \cap \Omega_{2m} = \varnothing$. 
Using $\abs{\Omega_{m}} \to 0$ as $m \to \infty$ again, we see that $\int_{\C} f(\xi) (1 - \psi_m(\xi)) \sigma(\xi) \dd^2 \xi$ tends to zero as $m \to \infty$. 
This completes the proof of Theorem~\ref{thm:global_law} assuming that \eqref{eq:global_law} holds.

The main part of the proof  of \eqref{eq:global_law}   is to show the existence of a constant $\delta >0$ such that 
\begin{equation} \label{eq:global_law_X_quantitative} 
\absbb{\frac{1}{n} \sum_{\zeta \in \spec  (X + A)} f(\zeta) - \int_{\C} f(\zeta ) \sigma^{(n)}(\dd \zeta)} 
 \lesssim n^{-\delta} \norm{\Delta f}_{\mathrm{L}^3}
\end{equation}
with  probability at least $1-O(n^{-\delta})$  uniformly for all $f \in C_0^2 (\C)$ satisfying $\supp f \subset  \mathbb{D}_\varphi  \setminus \Omega_m$
for any fixed  constants $\varphi \in (0,\infty)$ and $m \in \N$.  

We now explain how \eqref{eq:global_law_X_quantitative} implies \eqref{eq:global_law}, and thus Theorem~\ref{thm:global_law}. 
If $f \in C_0^2 (\C)$  with $\supp f \cap \Omega_m = \varnothing$  then this is obvious. 
Let $f \in C_b(\C) \setminus C_0^2(\C)$  such that $\supp f \cap \Omega_m = \varnothing$.  
Owing to Remark~\ref{rem:exclusion_eigenvalues}, we know that $\spec(X + A) \subset \mathbb{S}_1$ with very high probability.  
We note that $\mathbb{S}_1 \subset \mathbb{D}_\varphi$ for some $\varphi \sim 1$ by Remark~\ref{rmk:S eps}. 
By possibly increasing $\varphi \sim 1$, we also have 
 $\supp \sigma^{(n)} \subset \mathbb{D}_{\varphi}$ 
due to Proposition~\ref{pro:existence_sigma_and_sigma_n} \ref{item:L_n_sigma_n}.  
Therefore, it suffices to consider $f \in C_b(\C)$ with $\supp f \subset \mathbb{D}_{\varphi + 1} \setminus \Omega_m$. 
Then we find $f_\eps \in C_0^2(\C)$ such that $\norm{f - f_\eps}_{\mathrm L^\infty}\leq \eps/2$, $\supp f_\eps \subset \mathbb{D}_{\varphi + 1} \setminus \Omega_m$ 
and $\norm{\Delta f_\eps}_{\mathrm L^3} \lesssim_\eps 1$. 
Hence, approximating $f$ by $f_\eps$ in \eqref{eq:global_law} and using \eqref{eq:global_law_X_quantitative} for $f_\eps$ 
shows that \eqref{eq:global_law_X_quantitative} implies \eqref{eq:global_law}. 

It remains to show \eqref{eq:global_law_X_quantitative}. 
We fix  constants $\varphi  \in (0,\infty)$ and $m \in \N$ and set $\Omega = \mathbb{D}_\varphi\setminus \Omega_m$.  
For any $T>0$, we conclude from \eqref{eq:spectral_statistics_log_determinant}, \eqref{eq:log_determinant_integral}, 
Proposition~\ref{pro:existence_sigma_and_sigma_n} \ref{item:L_n_sigma_n}  and  the second bound in \eqref{eq:integral_bounds} for $v_1^{(n)}$ that 
\begin{equation} \label{eq:linear_statistics_reformulation_proof_global_law}  
\frac{1}{n} \sum_{\xi \in \spec (X + A)} f(\xi) - \int_{\C} f(\zeta) \sigma^{(n)}(\dd \zeta) 
= \int_{\Omega} F(\zeta) \frac{\dd^2 \zeta}{\abs{\Omega}} + O\big( T^{-1} \norm{\Delta f}_{\mathrm{L}^1}), 
\end{equation}  
where 
\[ F(\zeta) := \frac{\abs{\Omega}}{2\pi} (\Delta f(\zeta)) h(\zeta), \qquad h(\zeta) := \frac{1}{n} \sum_{\xi 
\in \spec(X + A)} \log\abs{\xi- \zeta} + \int_0^T \bigg( \avg{v_1^{(n)}(\zeta,\eta)} - \frac{1}{1 + \eta} \bigg) \dd \eta. \] 
Note that $h$ and, thus, $F$ depend on the choice of $T$.

Before estimating $\int_{\Omega} F(\zeta) \frac{\dd^2\zeta}{\abs{\Omega}}$, we now prove a pointwise bound of $F$, 
which will be translated to a bound on $\int_{\Omega} F(\zeta) \frac{\dd^2\zeta}{\abs{\Omega}}$ later with the help 
of Lemma~\ref{lem:monte_carlo}. 
In fact, we now show that there are constants $\delta>0$ and $\alpha >0$ such that with $T := n^{\delta}$  
\begin{equation} \label{eq:F_small_pointwise} 
\abs{F(\zeta)}  \lesssim  n^{-\delta} \abs{\Delta f(\zeta)} 
\end{equation} 
 with probability at least $1 - O(n^{-\alpha})$  uniformly for all $\zeta \in \Omega$. 

 Fix $\zeta \in\Omega$. We choose $\delta>0$ such that $3\delta$ coincides with $\delta>0$ from Proposition~\ref{pro:global_law_H}.  
We set $\eta_* := n^{- 3 \delta}$ and introduce 
\begin{align*} 
h_1(\zeta) & := \int_{\eta_*}^T \big( \avg{v_1^{(n)}(\zeta, \eta)}  - \Im \avg{G(\zeta, \ii \eta)} \big) \dd \eta, & 
h_2(\zeta) & := - \int_0^{\eta_*} \Im \avg{G(\zeta, \ii \eta)} \dd \eta \\ 
h_3(\zeta) & := \frac{1}{4n} \sum_{\lambda \in \spec(H_\zeta)} \log \bigg( 1 + \frac{\lambda^2}{T^2} \bigg) 
 - \log \bigg( 1 + \frac{1}{T} \bigg), & 
h_4(\zeta) & := \phantom{-} \int_0^{\eta_*} \avg{v_1^{(n)}(\zeta, \eta)} \dd \eta.
\end{align*} 
Hence, owing to \eqref{eq:log_determinant}, \eqref{eq:log_determinant_integral} and $\int_0^T (1 + \eta)^{-1} \dd \eta = \log (1 + T)$, 
we obtain the decomposition $h(\zeta) = h_1(\zeta) + h_2(\zeta) + h_3(\zeta) + h_4(\zeta)$.

Next, we estimate the terms $h_1, \ldots, h_4$ individually. 
For $h_1$,  we note that $S^{(n)} = (\E \abs{x_{ij}}^2)_{i,j \in \db{n}}$ by \eqref{eq:S_n_a_n_discretised} 
and the assumptions of Theorem~\ref{thm:global_law}. Hence,  Proposition~\ref{pro:global_law_H}, a union bound and a continuity argument in $\eta$ imply $\abs{h_1(\zeta)} \leq n^{-1 +  3  P\delta}$ with very high probability. 
A simple computation shows that 
\[ -h_2(\zeta) = \frac{1}{4n} \sum_{\lambda \in \spec(H_\zeta)} \log \bigg ( 1 + \frac{\eta_*^2}{\lambda^2} \bigg) \leq \frac{1}{4n} \sum_{\lambda \in \spec(H_\zeta)\cap[-\eta_*^{1/2}, \eta_*^{1/2}]} \log \bigg(1 + \frac{\eta_*^2}{\lambda^2}\bigg) + \eta_*,  \] 
where in the last step we used that $\log ( 1 + \eta_*^2 \lambda^{-2}) \leq \log (1 +  \eta_*) \leq \eta_*$ 
if $\abs{\lambda} > \eta_*^{1/2}$. 
To estimate the remaining sum, we  will use \ref{assum:smallest_singular_value}.  
As $a([0,1]) \subset \bigcup_{i=0}^{mK} \DD_{Cm^{-\theta}}(x_i^{(m)})$ (see the beginning of this proof), 
 $\min\{ \abs{\zeta- a(i/n)} \colon i \in \db{n} \} \geq Cm^{-\theta}$ for all sufficiently large $n \in \N$. 
Therefore, owing to \ref{assum:smallest_singular_value} and Lemma~\ref{lem:small_singular_values}, 
we find a constant $\alpha >0$ such that  
\[ 
\frac{1}{4n} \sum_{\lambda \in \spec(H_\zeta)\cap[-\eta_*^{1/2}, \eta_*^{1/2}]} 
\log \bigg(1 + \frac{\eta_*^2}{\lambda^2}\bigg) 
\lesssim \frac{\log \eta_* + \abs{\log \min_{\lambda \in \spec(H_\zeta)} \abs{\lambda}}}{n} \abs{\spec(H_\zeta)\cap[-\eta_*^{1/2}, \eta_*^{1/2}]} \lesssim n^{\eps} \eta_*^{1/2} 
\] 
with  probability at least $1-O(n^{-\alpha})$  for any $\eps>0$. 
Therefore, $\abs{h_2(\zeta)} \lesssim n^{-\delta}$.
To estimate $h_3$, we use $\log (1 + x) \leq x$ and obtain 
\[ \abs{h_3(\zeta)} \leq \frac{1}{4nT^2} \tr(H_\zeta)^2 + T^{-1} = \frac{1}{2n T^2} \sum_{i,j=1}^n 
(\overline{x_{ji}} + (\bar a_i -  \bar \zeta)  \delta_{ji})(x_{ij} + ( a_i - \zeta) \delta_{ij}) + T^{-1} 
\lesssim T^{-1} \] 
since $\abs{x_{ij}} \leq n^{-1/2 + \eps}$ with very high probability due to \eqref{bounded moments} and $\abs{a_i} + \abs{\zeta} \lesssim 1$  as $\norm{a}_\infty\lesssim 1$  and $\zeta \in \mathbb{D}_\varphi$. 
  Since $s$ satisfies \ref{assum:primitive_upper_lower} and $a \in \mathcal B$, 
\eqref{eq:S_n_a_n_discretised} and 
 \eqref{eq:avg_M_bounded} for $\avg{v_1^{(n)}}$ imply  $\abs{h_4(\zeta)} \lesssim \eta_*$ uniformly for all $n \in \N$. 
This completes the proof of \eqref{eq:F_small_pointwise}.

Next, we use \eqref{eq:F_small_pointwise} and Lemma~\ref{lem:monte_carlo} to estimate $\int_{\Omega} F(\zeta) \frac{\dd^2\zeta}{\abs{\Omega}}$. 
Since $\zeta \mapsto \log \abs{\xi - \zeta}$ lies in $\mathrm{L}^p(\Omega)$ for every $p \in [1,\infty)$, 
the first bound in \eqref{eq:integral_bounds} implies that, for every $p \in [1,\infty)$, 
$\norm{h}_{\mathrm L^p(\Omega)} \lesssim_p 1$ uniformly for $T >0$. 
Therefore, $F \in L^2(\Omega)$ and Lemma~\ref{lem:monte_carlo} with $\eps = n^{-\alpha/4}$ and $N = n^{3\alpha/4}$ yields 
\begin{equation} \label{eq:integral_minus_difference_bound} 
\absbb{ \int_{\Omega} F(\zeta) \frac{\dd^2\zeta}{\abs{\Omega}} - \frac{1}{N} \sum_{i = 1}^N F(\xi_i) } 
\lesssim n^{-\alpha/4} \norm{F}_{\mathrm L^2} \lesssim n^{-\alpha/4} \norm{\Delta f}_{\mathrm L^3} 
\end{equation} 
with probability at least $1-n^{-\alpha/4}$, where $\xi_1, \ldots, \xi_N$ are independent random variables distributed according to 
the normalized Lebesgue measure on $\Omega$  and independent of $X$ for all $n \in \N$.

Furthermore, conditioning on $\xi_1$, \ldots, $\xi_N$, a union bound over $i \in \db{N}$ and the bound \eqref{eq:F_small_pointwise} imply 
\begin{equation}\label{eq:F_average_small} 
\frac{1}{N} \sum_{i=1}^N \abs{F(\xi_i)} \leq \frac{n^{-\delta}}{N}\sum_{i=1}^m \abs{\Delta f(\xi_i)} 
\leq n^{-\delta} \norm{\Delta f}_{\mathrm L^1} + n^{- \alpha/4} \norm{\Delta f}_{\mathrm L^2} 
\end{equation}  
with  probability at least $1 - O(n^{-\alpha/4})$,  where the second step follows from Lemma~\ref{lem:monte_carlo} with $F = \Delta f$ as well 
as $\eps = n^{-\alpha/4}$ and $N=n^{3\alpha/4}$ as before. 
Finally, we combine \eqref{eq:linear_statistics_reformulation_proof_global_law}, \eqref{eq:integral_minus_difference_bound} and \eqref{eq:F_average_small},  recall the choice $T = n^{\delta}$ 
and choose $\delta$ to be $\min\{ \delta, \alpha/4\}$ 
to obtain \eqref{eq:global_law_X_quantitative}. This  completes the proof of Theorem~\ref{thm:global_law}. 
\end{proof}

\linkdest{proof:thr:Spectrum occupies pseudospectrum}
\subsection{Proof of Theorem~\ref{thr:Spectrum occupies pseudospectrum}} 
\label{subsec:proof_theorem_spectrum_pseudospectrum} 

We recall the definition of $\mathbb S_\eps$ from \eqref{eq:def_S_eps} and set 
\begin{equation} \label{eq:def_spec_eps_infty_s_a} 
\spec_\eps^\infty(s,a) := \mathbb S_\eps.  
\end{equation} 
With this definition, \eqref{eq:cap_spec_eps_infty_equals_supp_sigma} follows from 
$\cap_{\eps>0} \spec_\eps^\infty(s,a) = \mathbb S_0 = \supp \sigma$ due to 
Remark~\ref{rmk:S eps} and \eqref{eq:overline_S_equal_intersection_S_eps}. 
Now we verify \eqref{convergence of pseudospectrum}. First we see that for any $\eps,\delta>0$ the inclusion 
\[
\limsup_{n\to \infty} \spec_\eps(X_n + A_n) \subset \spec_{\eps+\delta}^\infty(s, a)= \mathbb S_{\eps+\delta}
\]
holds almost surely by Remark~\ref{rem:exclusion_eigenvalues} and the Borel-Cantelli lemma. Since $\cap_{\delta>0}\mathbb S_{\eps+\delta}=\mathbb S_{\eps}$ by definition this shows the first inclusion in \eqref{convergence of pseudospectrum}.

The second inclusion  in \eqref{convergence of pseudospectrum}  follows from  
\bels{second inclusion of thr 2.3}{
\mathbb S_{\eps} \subset \spec_{\eps+\delta }(X_n + A_n)= \{\zeta \in \C: \dist(0,\spec(H_\zeta)) \le \eps +\delta \}
}
eventually almost surely for any $\eps,\delta>0 $.   Here $H_\zeta$ is the Hermitisation of $X_n+A_n$ from \eqref{eq:def_H_zeta}. 
To prove \eqref{second inclusion of thr 2.3} we see that the global law from \cite[Theorem~2.7]{AEKN_Kronecker} holds almost surely when all random matrices in the statement are realised on the same probability space. This can be seen easily from its proof. Indeed, the global law is an immediate consequence of  \cite[eq.\ (B.5)]{AEKN_Kronecker}, which holds with very high probability. Thus, the Borel-Cantelli lemma  and Corollary~\ref{cor:singular_value_density_n_to_rho_zeta} \ref{item:convergence_rho_zeta_n} ensure  almost sure convergence in  
\[
\frac{1}{2n}\tr f(H_\zeta) \to \int_{\R} f(\tau) \rho_{\zeta}(\dd \tau)
\]
for every compactly supported continuous function $f$.
\qed

\section{Discretizing the Dyson equation} 

\label{sec:discretizing_Dyson_equation} 

 In this section, we prove Corollary~\ref{cor:convergence_sigma_n_to_sigma} and Corollary~\ref{cor:singular_value_density_n_to_rho_zeta}. They  both follow from the next lemma.

Throughout this section, we write $\C_+ := \{ w \in \C \colon \Im w >0 \}$.

\begin{lemma} \label{lem:discretizing_dyson_eq} 
Let $s$ and $a$ satisfy  \ref{assum:s_a_piecewise_continuous}.  
Let $M(\zeta,w)$ be the solution of \eqref{general MDE} associated with $s$ and $a$. 
For $n \in \N$, define $\wh{a}^{(n)}\colon [0,1] \to \C$ and $\wh{s}^{(n)}\colon [0,1]^2 \to [0,\infty)$ through 
\begin{equation} \label{eq:def_a_n_s_n}  
\wh{a}^{(n)} := \sum_{i=1}^{n} a(i/n) \mathbf 1_{[(i-1)/n, i/n)}, \qquad 
\wh{s}^{(n)} := \frac{1}{n}\sum_{i,j = 1}^n s(i/n,j/n) \mathbf 1_{[(i-1)/n, i/n)\times [(j-1)/n, j/n)}, 
\end{equation}  
where $\mathbf 1_\Omega$ denotes the indicator function of the set $\Omega$.  
Let $\wh{\Sigma}^{(n)}$ be defined analogously to \eqref{eq:def_Sigma} with $s$ replaced by $\wh{s}^{(n)}$.  
If $\wh{M}^{(n)}$ is the unique solution of \eqref{general MDE} with $\wh{a}^{(n)}$ and $\wh{\Sigma}^{(n)}$ instead of $a$ and $\Sigma$, 
$\delta >0$ is constant and $\zeta \in \C$ is fixed, then  
\[ \lim_{n \to \infty} \norm{\wh{M}^{(n)}(\zeta, w) - M(\zeta, w)}_2 = 0 \] 
uniformly for all $w \in \C_+$ satisfying $\dist(w,\supp \rho_\zeta) \geq \delta$. 
Here, $\norm{R}_2 := \norm{\Tr(R^*R)}_{1}^{1/2}/\sqrt{2}$ for any $R \in \mathcal B^{2\times 2}$, where $\Tr(R^*R)$ is considered as a function on $[0,1]$, 
and $\norm{f}_p$ is the $L^p([0,1], \mu)$-norm for $f \colon [0,1] \to \C$. 
\end{lemma}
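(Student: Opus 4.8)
The plan is to treat $\widehat M^{(n)}(\zeta,w)$ as the solution of a perturbation of the matrix Dyson equation \eqref{general MDE} for $M(\zeta,w)$ and to run a stability argument. Write $M:=M(\zeta,w)$, $\widehat M:=\widehat M^{(n)}(\zeta,w)$, and let $Z$, $\widehat Z\in\mathcal B^{2\times2}$ denote $\mtwo{w & \zeta-a}{\overline{\zeta-a} & w}$ and $\mtwo{w & \zeta-\widehat a^{(n)}}{\overline{\zeta-\widehat a^{(n)}} & w}$, so that $-M^{-1}=Z+\Sigma[M]$ and $-\widehat M^{-1}=\widehat Z+\widehat\Sigma^{(n)}[\widehat M]$. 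Subtracting these and using $\widehat M^{-1}-M^{-1}=\widehat M^{-1}(M-\widehat M)M^{-1}$ gives, for $D:=M-\widehat M$,
\[
  D=\widehat M\,\Sigma[D]\,M+E^{(n)},\qquad E^{(n)}:=\widehat M\,(Z-\widehat Z)\,M+\widehat M\,(\Sigma-\widehat\Sigma^{(n)})[\widehat M]\,M,
\]
hence $D=(\mathcal I-\mathcal L_w^{(n)})^{-1}[E^{(n)}]$ with $\mathcal L_w^{(n)}[R]:=\widehat M\,\Sigma[R]\,M$, provided $\mathcal I-\mathcal L_w^{(n)}$ is boundedly invertible on $(\mathcal B^{2\times2},\norm{\genarg}_2)$. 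It therefore suffices to establish: (i) uniform bounds $\norm{M}_{L^\infty},\norm{\widehat M}_{L^\infty}\lesssim_\delta1$ for $w\in\C_+$ with $\dist(w,\supp\rho_\zeta)\ge\delta$; (ii) $\norm{E^{(n)}}_2\to0$ uniformly over such $w$; (iii) $\norm{(\mathcal I-\mathcal L_w^{(n)})^{-1}}_{2\to2}\lesssim_\delta1$ for such $w$ and $n$ large. These combine to give $\norm{D}_2\lesssim_\delta\norm{E^{(n)}}_2\to0$, uniformly, which is the claim.

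For (i), recall (e.g.\ from \cite{AEK_Shape}) that $w\mapsto M(\zeta,w)$ has a Stieltjes representation $M(\zeta,w)=\int_\R(\tau-w)^{-1}\,V_\zeta(\mathrm d\tau)$ with a positive-semidefinite $\mathcal B^{2\times2}$-valued measure $V_\zeta$ of total mass $\mathbf 1$ and normalized trace $\avg{V_\zeta}=\rho_\zeta$; since $V_\zeta\ge0$ and $\rho_\zeta$-null sets are $V_\zeta$-null, $\supp V_\zeta\subseteq\supp\rho_\zeta$, whence $\norm{M(\zeta,w)}_{L^\infty}\le\dist(w,\supp\rho_\zeta)^{-1}\le\delta^{-1}$, and the same reasoning gives $\norm{\widehat M^{(n)}(\zeta,w)}_{L^\infty}\le\dist(w,\supp\rho_\zeta^{(n)})^{-1}$. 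For (ii), assuming the bound $\norm{\widehat M}_{L^\infty}\lesssim_\delta1$, the Hilbert--Schmidt boundedness of $\Sigma$, $\widehat\Sigma^{(n)}$ on $\mathcal B$ and the identity $\norm{Z-\widehat Z}_2=\norm{a-\widehat a^{(n)}}_{L^2[0,1]}$ yield $\norm{E^{(n)}}_2\lesssim_\delta\norm{a-\widehat a^{(n)}}_{L^2[0,1]}+\norm{s-\widehat s^{(n)}}_{L^2([0,1]^2)}$, and Assumption~\ref{assum:s_a_piecewise_continuous} bounds the right-hand side by $O(n^{-1/2})$: on every cell $[(i-1)/n,i/n)\times[(j-1)/n,j/n)$ lying inside a single $\overline{I_l\times I_k}$, resp.\ cell $[(i-1)/n,i/n)$ inside a single $\overline{I_l}$, the $1/2$-Hölder continuity controls the piecewise-constant interpolation error by $O(n^{-1/2})$, while the $O(1)$ cells straddling an interval boundary fill a set of Lebesgue measure $O(1/n)$ on which the error is only $O(1)$. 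Thus (ii) follows once (i) is known for $\widehat M$.

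The remaining point --- and \emph{the only genuinely delicate one} --- is the uniform $L^\infty$ bound on $\widehat M^{(n)}$, for which it is enough to show that the discretized self-consistent spectrum does not escape: $\supp\rho_\zeta^{(n)}\subseteq\supp\rho_\zeta+(-\delta/2,\delta/2)$ for all large $n$, giving $\dist(w,\supp\rho_\zeta^{(n)})\ge\delta/2$ and hence $\norm{\widehat M^{(n)}(\zeta,w)}_{L^\infty}\le2/\delta$ whenever $\dist(w,\supp\rho_\zeta)\ge\delta$. Circularity must be avoided here, because $\widehat M$ and $M$ agree only in an averaged sense --- near the discontinuities of $a$ they differ by $O(1)$ in $L^\infty$, and this difference does not tend to $0$ --- so one cannot compare them in operator norm. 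The plan is a continuity argument in $w$: the set $U_\delta:=\{w\in\C_+:\dist(w,\supp\rho_\zeta)>\delta/2\}$ is connected and contains $\{\Im w>R\}$ for a sufficiently large $R\sim1$, where $M(\zeta,w)$ and $\widehat M^{(n)}(\zeta,w)$ are both $O((\Im w)^{-1})$ and lie within $O((\Im w)^{-1})$ of $-w^{-1}\mathbf 1$, uniformly in $n$. Propagating from there into $U_\delta$, so long as $\norm{\widehat M^{(n)}(\zeta,w)}_{L^\infty}\le2/\delta$ the stability estimate for the Dyson equation away from its own self-consistent spectrum keeps $w\mapsto\widehat M^{(n)}(\zeta,w)$ holomorphic and of norm $\lesssim_\delta1$, so it develops no singularity inside $U_\delta$; hence $\supp\rho_\zeta^{(n)}\cap U_\delta=\varnothing$ for $n$ large. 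The tightness needed to run this is supplied by the uniform-in-$n$ a~priori localization $\supp\rho_\zeta^{(n)}\subseteq\mathbb D_C$ with $C\lesssim1$, obtained exactly as in Remark~\ref{rmk:S eps}. (Alternatively, this upper semicontinuity of the self-consistent spectrum under convergence of the Dyson data could be imported from the stability theory of the matrix Dyson equation in \cite{AEK_Shape,AEKN_Kronecker}, once the data convergence of the preceding paragraph is recorded.)

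Granting (i), step (iii) is routine: with $\norm{M}_{L^\infty},\norm{\widehat M}_{L^\infty}\lesssim_\delta1$, the bounded invertibility of $\mathcal I-\mathcal L_w^{(n)}$ on $(\mathcal B^{2\times2},\norm{\genarg}_2)$ follows from the standard invertibility --- quantified by $\dist(w,\genarg)$ --- of the genuine Dyson stability operators $\mathcal I-M\,\Sigma[\genarg]\,M$ (away from $\supp\rho_\zeta$) and $\mathcal I-\widehat M\,\widehat\Sigma^{(n)}[\genarg]\,\widehat M$ (away from $\supp\rho_\zeta^{(n)}$), since $\mathcal L_w^{(n)}$ differs from these only through insertions of $\widehat M-M$, which, although not small in $L^\infty$, are small in $\norm{\genarg}_2$, and the Hilbert--Schmidt boundedness of $\Sigma$ lets the resulting mixed-norm error terms close a short bootstrap over $\norm{D}_2\le\eps_0$. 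Then $\norm{D}_2=\norm{(\mathcal I-\mathcal L_w^{(n)})^{-1}[E^{(n)}]}_2\lesssim_\delta\norm{E^{(n)}}_2\to0$ uniformly over $w$ with $\dist(w,\supp\rho_\zeta)\ge\delta$, as required. I expect the non-escape of $\supp\rho_\zeta^{(n)}$ --- together with the attendant treatment of the mixed stability operator $\mathcal L_w^{(n)}$ --- to be the main obstacle: it is forced by the fact that $\widehat M^{(n)}\to M$ only in an averaged norm, which precludes a direct operator-norm perturbation and makes the bootstrap/continuity argument (exploiting analyticity in $w$ and the connectedness of $\C_+\setminus(\supp\rho_\zeta+(-\delta/2,\delta/2))$) unavoidable.
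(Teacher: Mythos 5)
Your overall architecture --- subtract the two Dyson equations, exploit the $2\to\infty$ smoothing of $\Sigma$ so that $L^2$-smallness of the discretisation error suffices, and close with a bootstrap --- matches the paper's, and your derivation of the equation for $D$ and the estimate $\Psi_n=\norm{\Sigma-\wh\Sigma^{(n)}}_{2}+\norm{a-\wh a^{(n)}}_2=O(n^{-1/2})$ from \ref{assum:s_a_piecewise_continuous} are correct. The genuine gap is exactly where you flag it: the uniform bound $\norm{\wh M^{(n)}(\zeta,w)}\lesssim_\delta 1$ as $\Im w\downarrow 0$. Your plan is to first prove $\supp\rho_\zeta^{(n)}\subset\supp\rho_\zeta+(-\delta/2,\delta/2)$ by a continuity argument in $w$ and then invoke the Stieltjes representation; but the openness half of that continuity argument does not close as sketched. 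From the hypothesis $\norm{\wh M^{(n)}}\le 2/\delta$ you can deduce, via the stability operator of the \emph{continuum} equation, that $\norm{\wh M^{(n)}-M}_2$ is small, but you must then convert this $L^2$-proximity back into an $L^\infty$ bound on $\wh M^{(n)}$ that is \emph{strictly better} than $2/\delta$, and no mechanism for that is given. It cannot come from a Neumann expansion of $-(\wh M^{(n)})^{-1}=\wh Z+\wh\Sigma^{(n)}[\wh M^{(n)}]$, because $\wh Z-Z$ contains $a-\wh a^{(n)}$, which stays $O(1)$ in $L^\infty$ near the discontinuities of $a$; nor from the trivial bound $(\Im w)^{-1}$, which blows up. This $L^2$-to-$L^\infty$ self-improvement is precisely the nontrivial input the paper imports as \cite[Lemma~5.5]{AK_Brown} (namely $\norm{\wh M^{(n)}-M}_2\le 1$ implies $\norm{\wh M^{(n)}}\le C_2$). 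Note also that the non-escape statement you want to use as an input is Corollary~\ref{cor:singular_value_density_n_to_rho_zeta}~\ref{item:limsup_rho_zeta_n_subset_supp_rho_zeta} in the paper, proved there as a \emph{consequence} of this lemma, and that upper semicontinuity of the self-consistent spectrum under mere $L^2$-convergence of the Dyson data is not an off-the-shelf result of \cite{AEK_Shape,AEKN_Kronecker}, whose stability theory is formulated in norms where the data actually converge.

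Granting that input, the paper's closing argument is also organised differently and more simply than your step (iii): rather than inverting the mixed operator $\mathcal I-\wh M^{(n)}\Sigma[\genarg]M$, it keeps the quadratic term, inverts only $L[R]=R-M\Sigma[R]M$ (whose $\norm{\genarg}_2$-invertibility away from $\supp\rho_\zeta$ is taken from \cite{AEKN_Kronecker}), obtains $\norm{\Delta}_2\le C(\norm{\Delta}_2^2+\Psi_n)$ whenever $\norm{\Delta}_2\le 1$, and runs the bootstrap along the vertical shift $t\mapsto w+\ii t$ from $t=\infty$ (where $\norm{\wh M^{(n)}},\norm{M}\le t^{-1}$) down to $t=0$, rather than along paths in $w$. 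Your route can be completed, but only once you prove or cite the $L^2$-to-$L^\infty$ regularity statement; as written, the proof is incomplete at its acknowledged crux.
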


Throughout the remainder of this section, some operators appear that map $\mathcal B^{2\times 2}$ 
to $\mathcal B^{2\times 2}$. We write $\norm{\,\cdot\,}_{\ast \to \#}$ with $\ast$, $\# \in \{ 2, \infty\}$ 
for the operator norm if the definition space is equipped 
with the norm $\norm{\,\cdot\,}_\ast$ and the target space with $\norm{\,\cdot\,}_\#$. 
If $\ast = \#$ then we simply write $\norm{\,\cdot\,}_\ast$ for the corresponding operator norm.

\begin{proof} 
We fix $\delta >0$ and $\zeta \in \C$. 
We introduce the matrices $A\in \mathcal B^{2\times 2}$ and $A^{(n)}\in \mathcal B^{2\times 2}$ through  
\[ 
A := \begin{pmatrix} 0 & a\\ \overline{a} & 0 \end{pmatrix}, \qquad 
A^{(n)} := \begin{pmatrix} 0 & a^{(n)}\\ \overline{a^{(n)}} & 0 \end{pmatrix}. 
\] 
For $w \in \C_+$ satisfying $\dist(w, \supp \rho_\zeta )\geq \delta$ and $t \geq 0$, we set $\wh{M}^{(n)} = \wh{M}^{(n)}(\zeta,w + \ii t)$,  $M = M(\zeta,w+ \ii t)$ 
and 
$L[R] := R - M \Sigma[R] M$ for all $R \in \mathcal B^{2\times 2}$. 
With $\Delta := \wh{M}^{(n)} - M$, a short computation starting from \eqref{general MDE} and the 
analogous relation with $M^{(n)}$, $a^{(n)}$ and $\Sigma^{(n)}$ yields 
\bels{discretisation stability}{
L [\Delta] = M \Sigma[\Delta]\Delta + 
  M (\Sigma^{(n)} - \Sigma)[\wh{M}^{(n)}] \wh{M}^{(n)} + M(A - A^{(n)} ) \wh{M}^{(n)}. 
 }
We now invert $L$ and estimate the resulting relation in $\norm{\,\cdot\,}_2$. 
We collect a few auxiliary bounds. 
From \cite[eq.s~(3.22), (3.11a), (3.11c)]{AEKN_Kronecker}, we conclude the existence of a constant $C_1>0$, depending only on $\delta$ 
but independent of $w$ and $t$, 
such that $\norm{L^{-1}}_{2} \leq C_1$ for all $w \in \C_+$ with $\dist(w,\supp\rho_\zeta) \geq \delta$ and $t \geq 0$.  
 As $\norm{M(\zeta,w)} \leq (\dist(w,\supp \rho_\zeta))^{-1}$ by \cite[eq.~(3.11a)]{AEKN_Kronecker}\footnote{The proof in \cite{AEKN_Kronecker} is given in the finite dimensional setup; the proof in the setup of this article 
is identical.}, 
we have  
 $\norm{M} \leq  (\max\{\delta,t\})^{-1}$ and 
 $\norm{\wh{M}^{(n)}} \leq  t^{-1}$  for all $t \geq 0$. 
Owing to \cite[Lemma~B.2(i)]{AEK_Shape}, the upper bound on $s$ following from its piecewise continuity implies that 
there is a constant $C_2 \geq 1$ such that $\norm{\Sigma}_{2\to \infty} \leq C_2$. 
 From \ref{assum:s_a_piecewise_continuous} and \eqref{eq:def_a_n_s_n}, we conclude that  
\cite[Assumption~A2]{AK_Brown}   
holds with $\mathfrak X = [0,1]$, $\mu$ the Lebesgue measure on $[0,1]$ and 
$a$ and $s$ replaced by $\wh{a}^{(n)}$ and $\wh{s}^{(n)}$, respectively. 
Hence,  \cite[Lemma~5.5]{AK_Brown}  implies that for some constant $C_2>0$ 
depending only on $\delta$, we have $\norm{\wh{M}^{(n)}} \leq C_2$ if $\norm{\Delta}_2 = \norm{\wh{M}^{(n)} - M}_2 \leq 1$.  
Therefore, there is a constant $C\geq 1$, depending only on $ \delta$ but not on  $w$ or $t$,  such that 
\begin{equation} \label{eq:Delta_leq_Psi_plus_Delta_squared}  
\norm{\Delta}_2 \leq C( \norm{\Delta}_2^2 + \Psi_n), \qquad \Psi_n :=  \norm{\Sigma^{(n)} - \Sigma}_{2 } 
 + \norm{A- A^{(n)}}_2  
\end{equation} 
for all $w \in \C_+$  and all $t\geq 0$ satisfying 
 $\dist(w,\supp \rho_\zeta) \geq \delta$ and $\norm{\Delta}_2 \leq 1$.  Here, $\Delta \equiv \Delta (\zeta, w + \ii t)$. 

 Since $s$ and $a$ are blockwise uniformly continuous by \ref{assum:s_a_piecewise_continuous},  
 $\Psi_n \to 0$ as $n \to \infty$. 
Thus, we find $n_0 \in \N$ such that $2 \Psi_n C^2 \leq 1/4$ for all $n \geq n_0$. 
Fix $w \in \C_+$ with $\dist(w,\supp\rho_\zeta) \geq \delta$. 
We set $t_* := \sup\{ t \geq 0 \colon \norm{\Delta(\zeta,w + \ii t)}_2 \geq 2 C \Psi_n \}$. 
Since $\norm{M^{(n)}} + \norm{M} \to 0$ for $ t \to \infty$, we obtain $t_* < \infty$. 
Next, we conclude $t_* = 0$. Suppose $t_* >0$. 
Hence, $\norm{\Delta(\zeta, w + \ii t_*)}_2 = 2 C \Psi_n$ by continuity. 
As $2 \Psi_n C^2 \leq 1/4$, we  deduce $\norm{\Delta(\zeta, w + \ii t_*)}_2 \leq 1$ and hence,  from \eqref{eq:Delta_leq_Psi_plus_Delta_squared} that $\norm{\Delta(\zeta, w + \ii t_*)}_2 \leq 3 C\Psi_n/2 < 2 C \Psi_n = \norm{\Delta(\zeta, w + \ii t_*)}_2$. This contradiction implies $t_*=0$. 
Note that this holds for any $w \in \C_+$ as long as $\dist(w,\supp\rho_\zeta) \geq \delta$ and $n \geq n_0$. 
Thus, for $n \geq n_0$, we obtain 
$ \norm{M^{(n)}(\zeta, w) - M(\zeta,w)}_2 = \norm{\Delta(\zeta, w)}_2 \leq 2 C \Psi_n $ 
for all $w \in \C_+$ with $\dist(w,\supp\rho_\zeta) \geq \delta$, 
which concludes the proof of Lemma~\ref{lem:discretizing_dyson_eq} as $\Psi_n \to 0$ with $n\to \infty$. 
\end{proof}

Before proving Corollary~\ref{cor:convergence_sigma_n_to_sigma}, we remark that if  $a \in \mathcal B$ and  $s$ satisfies \ref{assum:primitive_upper_lower} 
 then 
\begin{equation} \label{eq:v_1_large_eta} 
 \norm{v_1(\zeta, \eta) - (1 + \eta)^{-1}} \lesssim (1 + \abs{\zeta})\eta^{-2} 
\end{equation} 
uniformly for $\eta \geq 1$ and $\zeta \in \C$  due to \cite[eq.~(5.15)]{AK_Brown}.

\begin{proof}[\linkdest{proof:cor:convergence_sigma_n_to_sigma}{Proof of Corollary~\ref{cor:convergence_sigma_n_to_sigma}}]   
Since $\sigma^{(n)}$ for all $n \in \N$ and $\sigma$ are probability measures on $\C$, for the weak convergence 
it suffices to show $\int_{\C} f \dd \sigma^{(n)} \to \int_{\C} f \dd \sigma$ as $n \to \infty$ 
for all $f \in C_0^2(\C)$. 
Fix $f \in C_0^2(\C)$. 
 As $a \in \mathcal B$ and $s$ satisfies \ref{assum:primitive_upper_lower},  
we conclude from  \eqref{eq:avg_M_bounded},  \eqref{eq:v_1_large_eta}  and the compactness of $\supp f$  that 
$ \absb{\Delta f(\zeta)\big( \avg{v_1^{(n)}(\zeta, \eta)} - \frac{1}{1 + \eta} \big)} 
\lesssim \frac{\abs{\Delta f(\zeta)}}{1 + \eta^2}$ 
uniformly  for  $\eta >0$, $\zeta \in \C$ and $n \in\N$. That is the implicit constant hidden by $\lesssim$ 
does not depend on $\eta$, $\zeta$ and $n$. 
Owing to the integrability of the right-hand side with respect to $\zeta$ and $\eta$ over $\C \times (0,\infty)$, 
we obtain from \eqref{eq:sigma_L_identity}, Fubini, dominated convergence  and Lemma~\ref{lem:discretizing_dyson_eq} with $\avg{M(\zeta,\ii\eta)} = \ii \avg{v_1(\zeta,\eta)}$ (compare \eqref{eq:structure_M}) and $\avg{\wh{M}^{(n)}(\zeta,\ii\eta)}=\avg{M^{(n)}(\zeta,\ii\eta)} = \ii \avg{v_1^{(n)}(\zeta,\eta)}$ for all $\zeta \in \C$ and $\eta >0$   that 
\begin{multline*}  
\int_{\C} f \dd \sigma^{(n)} = \int_{\C} \int_0^\infty \Delta f(\zeta) \bigg( \avg{v_1^{(n)}(\zeta, \eta)} 
- \frac{1}{1+ \eta} \bigg) \dd \eta \,  \dd^2 \zeta \\ 
 \longrightarrow \int_{\C} \int_0^\infty \Delta f(\zeta) \bigg( \avg{v_1(\zeta, \eta)} 
- \frac{1}{1+ \eta} \bigg) \dd \eta \,  \dd^2 \zeta = \int_{\C} f \dd \sigma
\end{multline*}  
as $n \to \infty$. This completes the proof of Corollary~\ref{cor:convergence_sigma_n_to_sigma}. 
\end{proof}

\begin{proof}[\linkdest{proof:cor:singular_value_density_n_to_rho_zeta}{Proof of Corollary~\ref{cor:singular_value_density_n_to_rho_zeta}}]
Item \ref{item:convergence_rho_zeta_n} follows directly from the convergence of the Stieltjes transforms, 
i.e.\ for each $w \in \C_+$, $\frac{1}{2n} \tr M^{(n)}(\zeta,w)  = \avg{\wh{M}^{(n)}(\zeta,w)}  \to \avg{M(\zeta,w)}$ as $n \to \infty$, due to Lemma~\ref{lem:discretizing_dyson_eq}. 

For the proof of \ref{item:limsup_rho_zeta_n_subset_supp_rho_zeta}, 
 it suffices to show for fixed $\delta>0$ that $\supp \rho_\zeta^{(n)} \subset \supp \rho_\zeta+(-\delta, \delta)$ for all sufficiently large $n$. Fix $\delta >0$.  
If $\tau \in \R$ satisfies $\dist(\tau, \supp \rho_\zeta) \ge \delta $ then, by \cite[Lemma D.1]{AEK_Shape}, 
$M=M(\zeta, \tau) = \lim_{\eta \downarrow 0}M(\zeta, \tau + \ii \eta) $ exists and is self-adjoint. Moreover, $\norm{L^{-1}}_{2 }+ \norm{M}\lesssim_\delta 1$ uniformly for $\eta \geq 0$ and $\tau \in \R$ with $\dist(\tau,\supp \rho_\zeta) \geq \delta$. 
We recall the definition $L[R]= R- M\Sigma[R]M$ for $R \in \mathcal B^{2\times 2}$ from the proof of Lemma~\ref{lem:discretizing_dyson_eq}. 
As $\norm{M} \lesssim_\delta 1$, Lemma~\ref{lem:discretizing_dyson_eq} implies $\norm{\wh{M}^{(n)}(\zeta,\tau + \ii \eta)}_{2} \lesssim_\delta 1$ uniformly for all $\eta >0$, $\tau\in \R$ with $\dist(\tau,\supp\rho_\zeta) \geq \delta$ 
and all sufficiently large $n$. 
Arguing similarly as in the proof of  Lemma~\ref{lem:discretizing_dyson_eq},  we conclude 
$\norm{\wh{M}^{(n)}(\zeta,\tau + \ii \eta)} \lesssim_\delta 1$ uniformly for $\eta$, $\tau$ and $n$ as before. 
We set $\wh{M}^{(n)} := \wh{M}^{(n)}(\zeta, \tau + \ii \eta)$ and $L^{(n)}[R] = R- \wh{M}^{(n)}\Sigma^{(n)}[R] \wh{M}^{(n)}$ for $R \in \mathcal B^{2\times 2}$. 
For such $\eta$, $\tau$ and $n$, we obtain $\norm{(L^{(n)})^{-1}}_{2} \lesssim_\delta 1$ by perturbation 
theory from $\norm{M} + \norm{\wh{M}^{(n)}} + \norm{L^{-1}}_{2} \lesssim_\delta 1$, 
$\norm{\Sigma^{(n)}}_{2 \to \infty} + \norm{\Sigma}_{2\to \infty} \lesssim 1$ 
and $\norm{\wh{M}^{(n)} - M}_{2} \to 0$ for $n \to \infty$. 
Hence, by the implicit function theorem, for all sufficiently large $n$, the function $\eta \mapsto 
\wh{M}^{(n)}(\zeta, \tau + \ii \eta)$ is continuous on $[\eta_0-\eps,\eta_0 + \eps]$ for some $\eps>0$ independent of $\eta_0>0$. 
In particular, we can extend $\wh{M}^{(n)}$ continuously to $\eta = 0$ in a unique way.

Let $\tau \in \R$ with $\dist(\tau,\supp\rho_\zeta) \geq \delta$. 
For $M = M(\zeta,\tau)$, we now consider the relation 
\[
L[\Delta] = \frac{1}{2}\big(K_n(\Delta,\wt{\Sigma},\wt{A})+K_n(\Delta^*,\wt{\Sigma},\wt{A})^*\big)\,, \quad 
K_n(\Delta):= M \Sigma[\Delta]\Delta + 
  M \wt{\Sigma}[M+\Delta] (M+\Delta) + M\wt{A}(M + \Delta)\,,
\]
with variables $\Delta \in \mathcal B^{2\times 2}$, $\wt{A} =\wt{A}^* \in \mathcal B^{2\times 2}$, $\wt{\Sigma} \colon \mathcal B^{2\times 2} \to \mathcal B^{2\times 2}$ such that $\wt{\Sigma}[R]^* = \wt{\Sigma}[R^*]$ for all $R \in \mathcal B^{2\times 2}$. 
Since $\norm{L^{-1}}_{2}\lesssim_\delta 1$, by the implicit function theorem, this relation has a unique 
solution $\Delta$ as long as $\norm{\wt{\Sigma}}_{2}$ and $\norm{\wt{A}}_2$ are sufficiently small, 
as $L[0] = 0$ and $K_n(0,0,0) = 0$. 
Moreover, this solution satisfies $\Delta = \Delta^*$ as $L[R]^* = L[R^*]$ for all $R \in \mathcal B^{2\times 2}$ due to $M^* = M$. 
Owing to \eqref{discretisation stability} and $M=M^*$, we have $L[\wh{M}^{(n)} - M] 
= (K_n(\wh{M}^{(n)} - M,\Sigma^{(n)}-\Sigma,A-A^{(n)}) + K_n((\wh{M}^{(n)} - M)^*,\Sigma^{(n)}-\Sigma,A-A^{(n)}))/2$ 
with $\wh{M}^{(n)} = \wh{M}^{(n)}(\zeta,\tau)$.
Hence, as $\norm{\Sigma^{(n)}-\Sigma}_{2} + \norm{A- A^{(n)}}_2 \to 0$ for $n \to \infty$ by the proof of Lemma~\ref{lem:discretizing_dyson_eq}, we get  $\Delta = \wh{M}^{(n)}- M$ for all sufficiently large $n$ and, therefore, 
$\wh{M}^{(n)} = (\wh{M}^{(n)})^*$ for such $n$.   
Since this holds for any $\delta >0$ we conclude that $\im \wh{M}^{(n)}(\zeta, \tau+\omega)=0$ for sufficiently small $\abs{\omega}$ with $\omega \in \R$. 
Because of  $\frac{1}{2n} \tr M^{(n)}(\zeta,w) = \avg{\wh{M}^{(n)}(\zeta,w)}$ and  \eqref{ST of rho n}, this implies that $\tau \not \in \supp \rho_\zeta^{(n)}$. 
\end{proof}

{\small 
\bibliography{bibliography} 
\bibliographystyle{amsplain-nodash} 
}

\end{document}